\theoremstyle:=definition,remark,plain\do{%
        \expandafter\g@addto@macro\csname th@\theoremstyle\endcsname{%
            \addtolength\thm@preskip\parskip
            }%
        }
\newtheorem*{rep@theorem}{\rep@title}
\newcommand{\newreptheorem}[2]{%
\newenvironment{rep#1}[1]{%
 \def\rep@title{#2 \ref{##1}}%
 \begin{rep@theorem}}%
 {\end{rep@theorem}}}
\newtheorem{Definition}[equation]{Definition}
\newcounter{mark}
\newtheorem{Mark}[mark]{Marking Scheme}
\newtheorem{Walk}[mark]{Random Walk}
\newtheorem{Theorem}[equation]{Theorem}
\newtheorem{Proposition}[equation]{Proposition}
\newtheorem{Lemma}[equation]{Lemma}
\newtheorem{Corollary}[equation]{Corollary}
\newtheorem{Remark}[equation]{Remark}
\theoremstyle{definition}
\newtheorem{Example}[equation]{Example}
\DeclareMathOperator{\Pre}{\Pr_{ex}}
\newcommand{\tp}[2]{(#1 \; #2)}
\newcommand{\al}{\alpha}
\newcommand{\la}{\lambda}
\newcommand{\eps}{\epsilon}
\definecolor{darkred}{rgb}{0.7,0,0} 
\newcommand{\darkred}{\color{darkred}} 
\newcommand{\defn}[1]{\emph{\darkred #1}} 
\newcommand{\f}[2]{\frac{#1}{#2}}
\title{A strong stationary time for random transpositions}
\author{Graham White}
\date{\today}
\begin{document}

\maketitle

\abstract{We show that the random transposition walk on the symmetric group $S_n$ has cutoff in separation distance at $\frac{1}{2}n \log n$, by constructing a strong stationary time. The construction involves working with cycle types of permutations and some partition combinatorics.}

\section{Introduction}
\label{sec:intro}

The random transposition walk on the symmetric group $S_n$ has been extensively studied, for instance as the setting for the pioneering work of Diaconis and Shashahani in \cite{DS}. In \cite{Matthews}, Matthews shows that the separation distance mixing time is asymptotically at most $\frac{1}{2}n \log n$, which matches the standard coupon collector lower bound and thus gives cutoff for this walk in separation distance. It is shown in \cite{GWThesis} that the proof of this upper bound has a subtle flaw, and so the proof of separation distance cutoff is incomplete. The present paper presents an alternative proof of the upper bound, recovering separation distance cutoff for the random transposition walk.

Our main result is the following.

\begin{Theorem}
The random transposition walk has cutoff in separation distance at $\frac{1}{2}n \log n$.
\end{Theorem}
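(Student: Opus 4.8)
The plan is to invoke the standard bound relating strong stationary times to separation: if $T$ is a strong stationary time for the walk started at the identity, then the separation distance at time $t$ is at most $\mathbb{P}(T > t)$. The matching lower bound $\tfrac12 n\log n$ is the classical coupon-collector bound quoted in the introduction, so it suffices to produce a strong stationary time $T$ with $\mathbb{P}\bigl(T > (1+\eps)\tfrac12 n \log n\bigr)\to 0$ for each fixed $\eps>0$.

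The first step is a reduction to cycle types. The step distribution of the walk is invariant under conjugation, so the distribution after $t$ steps from the identity is a class function; hence it is governed by the lumped Markov chain $Q$ on partitions $\la\vdash n$ recording the cycle type, started from the all-ones partition $1^n$, with stationary law $\nu$ equal to the cycle-type distribution of a uniform permutation. Since $P^t(\mathrm{id},\si)/\pi(\si)=Q^t(1^n,\la(\si))/\nu(\la(\si))$ and every partition of $n$ arises as some $\la(\si)$, the separation distance of the transposition walk at time $t$ equals that of $Q$. It therefore suffices to build a strong stationary time for $Q$ — and, crucially, $Q$ is a genuinely smaller object, which is what makes it possible to aim for the sharp constant rather than the weaker constants coming out of element-by-element schemes in the style of Broder. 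I would realise $Q$ through the orbit partition of $[n]$, which at each step either coalesces two blocks (the two chosen coordinates lie in different cycles) or fragments one block (they lie in the same cycle), with the corresponding weights.

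The core of the proof is a marking scheme. I would carry auxiliary randomness designating some blocks as \emph{resolved}, with $R_t\subseteq[n]$ the union of the resolved blocks, and maintain the invariant that, conditionally on $R_t$, the partition of $R_t$ into resolved blocks, and the whole history, the unresolved blocks are distributed as the cycle structure of a uniform permutation of $[n]\setminus R_t$; the strong stationary time is $T:=\inf\{t: R_t=[n]\}$. The rule for promoting blocks to resolved status must be designed so that each coalescence and each fragmentation among unresolved blocks, combined with the auxiliary coins, preserves this conditional law — this is where the partition combinatorics enters, since one must reproduce exactly the sequential ``uniform stick-breaking'' description of a uniform permutation's cycle type (the cycle through a distinguished point, when $k$ points remain unresolved, having size uniform on $\{1,\dots,k\}$, with the rest conditionally uniform). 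I expect the correct rule to be fragmentation-aware, resolving a whole freshly-completed cycle at once, with a point becoming resolved essentially just after the first time it is chosen as a transposed coordinate, so that $T$ is controlled by a coupon-collector clock. Making a single rule respect both move types \emph{and} resolve points fast enough to yield the constant $\tfrac12$ — element-by-element schemes of Broder's type lose this constant — is the main obstacle.

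Finally, for the tail of $T$: by construction $T$ should be stochastically dominated, up to a lower-order term, by the number of steps until every point of $[n]$, with at most one exception, has appeared as one of the two transposed coordinates; since each step exposes two uniform coordinates, this time concentrates at $\tfrac12 n\log n$ with fluctuations of order $n$, so $\mathbb{P}\bigl(T>(1+\eps)\tfrac12 n\log n\bigr)\to 0$. Together with the lower bound this gives cutoff in separation distance at $\tfrac12 n\log n$.
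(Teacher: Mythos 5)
Your proposal sets up the right framework (a strong stationary time bounds separation, the coupon-collector lower bound, and conjugacy invariance so that only cycle types matter), and this matches the paper's high-level strategy. But the actual mathematical content is missing at exactly the point you yourself flag as ``the main obstacle'': you never specify the rule by which blocks become resolved when two blocks coalesce, nor prove that any such rule preserves your invariant. That rule and its verification are the heart of the matter --- in the paper they are Marking Scheme \ref{mar:mult} together with Definition \ref{def:merge}, and the proof that the invariant is preserved is Proposition \ref{prop:mixed}, which reduces to a genuinely delicate partition computation (Propositions \ref{prop:combineparts} and \ref{prop:subparts}: when a uniform permutation of $n$ cards is merged into a uniform permutation of $m\geq n$ cards via one transposition, one can only salvage a \emph{random} number $m+k$ of cards whose cycle type is that of a uniform element of $S_{m+k}$, the rest being returned as a uniform element of $S_{n-k}$). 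Naively hoping that all $m+n$ cards become random fails --- the paper points out the $(m+n)$-cycle probability obstruction --- so the ``stick-breaking'' consistency you gesture at cannot be achieved by a simple rule, and stating that you ``expect'' a fragmentation-aware rule to exist is not a proof. A secondary but real issue: your invariant is stated for the \emph{unresolved} blocks, which is vacuous at the time $T$ when $R_t=[n]$ and hence does not by itself make $T$ a strong stationary time; the invariant must be about the resolved part (as in Proposition \ref{prop:mixed}, where the cards inside each part of $P(t)$ are exchangeable given $P(t)$).

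The timing claim also has a gap. You assert that ``by construction'' $T$ is dominated by a coupon-collector clock at $\tfrac12 n\log n$, but with no construction this is unsupported, and in the scheme that actually works it is false as stated: a card is absorbed into the growing random block only when it interacts with that block, not merely when it is first touched. The paper therefore needs a two-phase analysis --- an $O(n)$ phase in which some part of $P(t)$ reaches size $n/3$, proved via a drift argument on the word length $l(\pi_t)$ (Proposition \ref{prop:firsthalf}), followed by a coupon-collector-type phase of length $n\ln n+O(n)$ in lazy time, i.e.\ $\tfrac12 n\log n$ for the original walk (Proposition \ref{prop:secondhalf}). Without the merging rule, the distributional identity, and this two-phase bound, the proposal is a plan rather than a proof.
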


We prove this theorem in Section \ref{sec:sst}, by constructing a strong stationary time for the (lazy) random transposition walk. Calculations supporting part of the proof are given in Section \ref{sec:calcs}. 

\subsection*{Acknowledgements}

I would like to thank Persi Diaconis and Megan Bernstein for many helpful discussions.

\subsection{History of the problem}

The random transposition walk is the following:

\begin{Walk}\label{wal:rt}
The random transposition walk on the symmetric group $S_n$ is the random walk where each step is either any transposition $\tp{i}{j}$, each with probability $\frac{2}{n^2}$, or the identity, with probability $\frac{1}{n}$. Customarily, this is viewed as a shuffling procedure on a deck of $n$ cards, where each step consists of choosing a random card with each hand and swapping those cards, doing nothing when the same card is chosen twice. We start at the identity permutation.
\end{Walk}

In $\cite{DS}$, Diaconis and Shashahani prove that the random transposition walk has cutoff in total variation distance at time $\frac{1}{2}n\log(n) \pm cn$. To show that this walk also has cutoff in separation distance, it is necessary to give an appropriate upper bound on the separation distance mixing time. In \cite{Broder}, Broder uses a strong stationary time to show that the separation distance mixing time is at most $2n \log n$. In \cite{Matthews}, Matthews improved this to $n \log n$, and then gave another modification which attempted to further improve the bound to $\frac{1}{2}n\log n$. This latter modification contains a subtle error, which was identified in \cite{GWThesis}. There, the author considers statements implied by Matthews' techniques which are stronger than those used for his result, and shows that these are false. An upcoming paper will discuss this issue further and give an explicit calculation showing that the error is not just with the proof but with some of the results, and thus that it cannot be easily fixed. Our goal in this paper is to recover the cutoff result via an alternate proof of this upper bound.

We will briefly present here Matthews' first improvement to Broder's scheme. Terminology has been chosen for ease of exposition, rather than being the same as in the original papers.  

\begin{Mark}[Due to Broder, \cite{Broder}, improvement by Matthews, \cite{Matthews}]
\label{mar:sstoriginal}
As Random Walk \ref{wal:rt} runs, mark cards as follows. At the first step, mark the chosen cards. At each later step, if an unmarked card and a marked card are chosen, or if an unmarked card is chosen twice, then mark the unmarked card. 
\end{Mark}

Broder and Matthews show that this marking scheme produces a strong stationary time, in the sense that conditioning on the event that every card has been marked after $t$ steps, for any $t$, produces a uniform distribution on elements of $S_n$. The key observation is that at each step, the set of marked cards are equally likely to be in any order amongst themselves. When a new card is marked, it is equally likely to have been swapped with any of the $k$ already-marked cards or to have been left in place, resulting in $k+1$ cards which are equally likely to be in any order.

Matthews attempted to improve this further by combining it with another strong stationary time, but a subtle mismatch between the assumptions in the inductions for the two strong stationary times meant that the result was not actually a strong stationary time. See Section $5.3$ of \cite{GWThesis} for discussion of this error.

Marking Scheme \ref{mar:sstoriginal} marks cards slowly when there are only a few marked cards, or when there are only a few unmarked cards. Marking cards slowly when there are only a few unmarked cards seems reasonable, because this corresponds to needing to choose each card in the deck at least once, and it will take some time to choose the last few cards. However, it seems plausible that the first few cards moved should be somehow close to random, and this idea is not captured by this marking scheme, which takes $O(n)$ steps to mark each of the first few cards.

Thus, improvements to Broder's strong stationary time will likely need to focus on the early stages of the process. It takes $\frac{1}{2}n\log(n)$ steps to mark the first half of the cards and another $\frac{1}{2}n\log(n)$ steps to mark the other half. If we could design a similar scheme which marks half of the cards in $O(n)$ steps, and the remaining half in $\frac{1}{2}n\log(n)$ steps, this would give the desired upper bound. This is what we will do in the next section.

\subsection{Notation}

We will consider transpositions to act on card labels, rather than on card positions. For instance, the transposition $\tp{1}{2}$ swaps the cards labelled by $1$ and $2$, not the cards currently in the first and second positions. (Because the set of transpositions is a conjugacy class, either choice would be valid).

We will often encounter partitions as the cycle types of permutations. Partitions will be written in descending order, for instance $(4,2,1)$ or $(7)$ are two partitions of $7$. Unless otherwise specified, these are integer partitions rather than set partitions, though we will sometimes need to associate additional data to these partitions, data which takes them partway towards being set partitions. 

We will be interested in the cycle types of uniformly random permutations.

\begin{Lemma}\label{lem:cycletype}
Let $\la$ be a partition of $n$, with for each $i$, $a_i$ the number of parts of size $i$ in $\la$. Then the probability that a uniformly random element of $S_n$ has cycle type $\la$ is $$\Pr(\la) = \frac{1}{\prod_{i=1}^{n}i^{a_i}(a_i)!}.$$
\end{Lemma}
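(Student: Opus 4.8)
The plan is to pass to the equivalent counting statement and then normalise. Since a uniformly random element of $S_n$ takes each value with probability $1/n!$, it suffices to show that the number of permutations of $S_n$ with cycle type $\la$ is exactly $n!\big/\prod_{i=1}^{n} i^{a_i}(a_i)!$; dividing by $n!$ then yields the claimed $\Pr(\la)$.

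To count these permutations I would use the standard slot-filling argument. Lay out $n = \sum_i i\,a_i$ ordered slots, grouped into $a_1$ blocks of size $1$, then $a_2$ blocks of size $2$, and so on. Each of the $n!$ bijections from $\{1,\dots,n\}$ to the slots determines a permutation of cycle type $\la$: read the entries $x_1,\dots,x_i$ of a size-$i$ block, in order, as the cycle $(x_1\ x_2\ \cdots\ x_i)$, and take the product over all blocks. Conversely every permutation of cycle type $\la$ arises this way. The heart of the argument is computing the multiplicity of this map: two fillings yield the same permutation exactly when they differ by some combination of (i) cyclically rotating the entries within individual blocks, contributing a factor of $i$ for each size-$i$ block and hence $\prod_i i^{a_i}$ overall, and (ii) permuting equal-sized blocks among themselves, contributing $\prod_i (a_i)!$ overall. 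These symmetries commute and together give all coincidences, so each permutation of cycle type $\la$ is produced exactly $\prod_i i^{a_i}(a_i)!$ times, and the count follows.

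The one point that deserves genuine care --- and the step I would write out in full --- is the claim in (i)--(ii) that these are \emph{all} the coincidences, i.e.\ that the subgroup of permutations of the slots generated by within-block rotations and equal-size-block swaps has orbits equal precisely to the fibres of the filling-to-permutation map. This is exactly the statement that the disjoint-cycle decomposition of a permutation is unique up to rotating each cycle and reordering the cycles, which I would quote or briefly justify. An alternative route is a short induction that builds the cycle through the element $1$, deletes those elements, and recurses, tracking the resulting product of factors; but the slot-filling count is more transparent and I would favour it.
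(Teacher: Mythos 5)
Your proof is correct. The paper states this lemma (Cauchy's classical formula for the number of permutations of a given cycle type) without proof, so there is nothing to compare against; your slot-filling argument, with the multiplicity $\prod_i i^{a_i}(a_i)!$ accounted for by within-cycle rotations and permutations of equal-length cycles, is the standard and complete justification, and your flagged point --- that these symmetries exhaust all coincidences, equivalently that the disjoint-cycle decomposition is unique up to rotating cycles and reordering them --- is indeed the one step worth writing out.
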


We will find it more convenient to work with a lazier version of the random transposition walk. 

\begin{Walk}\label{wal:rt2}
A lazier version of Random Walk \ref{wal:rt} is to apply any transposition, each with probability $\frac{1}{n(n-1)}$, or to do nothing, with probability $\frac{1}{2}$. This may be seen as choosing a random transposition, flipping a coin, and either applying that transposition or doing nothing.
\end{Walk}

For the remainder of the paper, we will work with Random Walk \ref{wal:rt2}. The question of whether Walk \ref{wal:rt} has cutoff in separation distance at $\frac{1}{2}n \log(n)$ is equivalent to whether Walk \ref{wal:rt2} has cutoff (in separation distance) at $n \log(n)$, because $n \log(n)$ steps of Walk \ref{wal:rt2} or $\frac{1}{2}n \log(n)$ steps of Walk \ref{wal:rt} both include $\frac{1}{2}n \log(n) + o(n)$ non-identity steps. 

As we run the random walk, we use the following notation for the steps taken and the current permutation.

\begin{Definition}\label{def:walknot}
As random walk \ref{wal:rt2} runs, let $\tau_t$ be the transposition chosen for the $t$th step, $\al_t$ be either $1$ or $0$ according to whether the transposition $\tau_t$ was actually applied or not, and $\pi_t$ be the permutation of the cards after this $t$th step.
\end{Definition}

Working with the random transposition walk, we will need to consider the number of transpositions required to build a given permutation.

\begin{Definition}
If $\pi$ is a permutation, then $l(\pi)$ is the minimum number of transpositions which can be multiplied to produce $\pi$. This is the length of the shortest path between the identity vertex and the vertex $\pi$ in the Cayley graph of $S_n$ with generating set the set of all transpositions.  
\end{Definition}

The length $l(\pi)$ may be computed by observing that when $\pi$ is a $k$--cycle, $l(\pi) = k-1$, and that $l$ is additive on disjoint cycles. This also implies that $l(\pi)$ depends only on the cycle type of $\pi$, so is unchanged by conjugation.

For any permutation $\pi$ and any transposition $\tau$, the length $l(\pi\tau)$ is either $l(\pi) - 1$ or $l(\pi) + 1$, depending on whether the two elements interchanged by $\tau$ are in the same cycle of $\pi$ or not. The same is true of $l(\tau\pi)$. 

\subsection{Merging partitions}

Our construction of a strong stationary time will rely on combining partitions in a certain way. The details of this section may safely be skipped on a first reading.

\begin{Definition}\label{def:merge}
Let $\la$ and $\mu$ be partitions of $m$ and $n$, respectively, with $m \geq n$. To \defn{merge} $\mu$ with $\la$ is to do the following.

Create new partitions $\nu$ and $\xi$ by distributing the parts of $\mu$ between $\la$ and an empty partition, as follows.
\begin{itemize}
\item Initialise $\nu$ as equal to $\la$ and $\xi$ as equal to $\mu$.
\item Choose $\mu_0$ to be a random part of $\mu$, with probability proportional to part size. 
\begin{itemize}
\item Case 1: With probability $\frac{m}{m+1}$, increase a random part $\nu_0$ of $\nu$ by $\mu_0$, again choosing proportionally to part size. 
\item Case 2: Otherwise (probability $\frac{1}{m+1}$), adjoin $\mu_0$ to $\nu$ as a new part. 
\end{itemize}
\item In either case, remove $\mu_0$ from $\xi$.
\item With probability $\frac{|\xi|}{|\nu|}$, choose a random part of $\xi$ with probability proportional to part size, remove it from $\xi$, append it to $\nu$, and repeat this step (updating the sizes $|\xi|$ and $|\nu|$ used to calculate the probability). Otherwise finish.
\end{itemize}
The resulting partitions $\nu$ and $\xi$ are partitions of random integers --- $\nu$ of an integer between $m$ and $m+n$, and $\xi$ of an integer between $0$ and $n$. The sizes of $\nu$ and $\xi$ add to $m+n$.
\end{Definition}

We will reserve the use of the word `merge' for this operation, using the word `combine' in the more general sense where two parts of sizes $a$ and $b$ become a single part of size $a+b$. The variables $\la$, $\mu$, $\nu$, and $\xi$ will always take these same roles. 

It will be convenient to have a name for the probabilities arising in Definition \ref{def:merge}.

\begin{Definition}\label{def:include}
For any partition $\mu$, any choice $\mu_0$ of a part of $\mu$, and any integer $m \geq |\mu|$, define the function $f(m,\mu,\mu_0)$ to be the sum over all permutations of the parts of $\mu$, whose sizes are $\mu_0$ through $\mu_i$, of the product $$\prod_{j=1}^i\frac{\mu_j}{m+\mu_0+\sum_{l=0}^{j-1}\mu_l}.$$ This is the probability that when the partition $\mu$ is merged with a partition $\la$ of size $m$ according to Definition \ref{def:merge} and starting with a part of size $\mu_0$ that all remaining parts of $\mu$ are added to $\nu$ rather than remaining in $\xi$. 

We call this the including factor of $(\mu,\mu_0)$ with respect to $m$. We will abuse notation slightly and use ``the including factor of $(\mu,\mu_0)$ with respect to $\la$'' to mean ``the including factor of $(\mu,\mu_0)$ with respect to $|\la|$''.
\end{Definition}

\begin{Remark}\label{rem:include}
Definition \ref{def:include} may also be used to describe the probability that a certain subset of the parts of $\mu$ are added to $\nu$ and the others are not. If $\mu'$ is any subpartition of $\mu$, working for the moment with labelled partitions where parts of the same size may be distinguished from one another, then the probability that the parts in $\mu'$ are added to $\nu$ and the other parts of $\mu$ remain in $\xi$ is $f(m,\mu',\mu_0)\cdot\frac{|\mu|-|\mu'|}{m+|\mu'|}$. Returning to unlabelled partitions, let $b_p$ and $c_p$ be the number of parts of size $p$ in $\mu'$ and in the remainder of $\mu$, respectively, not counting the part $\mu_0$. Then when $\mu$ of size $n$ is merged with a partition $\la$ of $m$, the probability that the parts added to $\nu$ are described exactly by $\mu'$ is $$f(m,\mu,\mu')\frac{m-n+2|\mu'|}{m+|\mu'|}\prod_{i=1}^n \binom{b_i+c_i}{b_i}.$$

In later calculations, $I(b)$ will often refer to an including factor of this kind, and $k$ will be used for $|\mu'|$.
\end{Remark}

\begin{Remark}
The probabilities in Definition \ref{def:merge} were obtained from examining the probabilities that certain cycles appear in a random permutation. For instance, a random element of $S_n$ has a probability of $\frac{1}{n}$ of having $1$ as a fixed point. Given that it does not fix $1$, it has a probability of $\frac{1}{n-1}$ that $1$ occurs as part of a $2$--cycle. Given that neither of these is the case, there is a probability of $\frac{1}{n-2}$ that $1$ occurs as part of a $3$--cycle, and so on.

We will construct a strong stationary time where elements of $S_m$ and $S_n$ are combined to produce elements of $S_{m+n}$. The probabilities in Definition \ref{def:merge} are chosen so that the distribution of resulting elements of $S_{m+n}$ has these properties. This is necessary because our goal is to show that under certain circumstances these elements are uniformly distributed in $S_{m+n}$.
\end{Remark}

\section{A strong stationary time}
\label{sec:sst}

In this section, we construct a strong stationary time for the random transposition walk. This strong stationary time may be thought of as a version of Broder's strong stationary time which keeps track of more cards, and as a result is able to mark cards more quickly near the beginning of the process. The proof that this scheme does result in a strong stationary time relies on some detailed calculations involving combining partitions, which we defer until Section \ref{sec:calcs}.

\begin{Definition}\label{def:parts}
In the following, we will want to consider a set partition of $[n]$ into subsets, which evolves with time. For each time $t$, let $P(t)$ be a partition of $[n]$. For each $i$, let $P_i(t)$ refer to the part of $P(t)$ containing $i$.

We will want to compare the sizes of various parts. If two parts have the same size, then we will break ties according to the smallest entry in that part, so $\{2,4,7\}$ is smaller than $\{3,5,6\}$.
\end{Definition}

We will define the partition $P(t)$ in terms of the path taken by an instance of the (lazy) random transposition walk, as described by the variables $\tau_t$, $\pi_t$, and $\al_t$ of Definition \ref{def:walknot}. 

\begin{Mark}\label{mar:mult}
Initially, let the partition $P(0)$ be comprised of $n$ parts of size $1$. 

We now define $P(t)$ in terms of $P(t-1)$ and $\tau_t$, $\pi_t$, and $\al_t$. Let $\tau_t$ be the transposition $\tp{i}{j}$.

If $i$ and $j$ are in the same part of $P(t-1)$, set $P(t) = P(t-1)$. Otherwise, let $P_i(t-1)$ be smaller than $P_j(t-1)$, breaking ties as in Definition \ref{def:parts}. If $\al_t = 1$ or if $\al_t = 0$ and $j$ is the smallest number in $P_j(t-1)$, then define the partition $P(t)$ as follows. Otherwise, set $P(t) = P(t-1)$.

It will be proven in Proposition \ref{prop:unioncycles} that $P_i(t-1)$ and $P_j(t-1)$ are unions of cycles of $\pi_{t-1}$, a fact we will now use.

\begin{itemize}
\item Initialise a partition $P'$ as equal to $P(t-1)$.
\item Consider the permutation $\pi_{t-1}$, and move all elements of the cycle containing $i$ from $P'_i$ to $P'_j$.
\item With probability $\frac{|P'_i|}{|P'_j|}$, choose a random cycle of $\pi_{t-1}$ from $P'_i$ with probability proportional to cycle size, move it from $P'_i$ to $P'_j$, and repeat this step. Otherwise go to the next step. After each iteration, update the sizes of the parts $P'_i$ and $P'_j$ for the calculation of the next probability, but do not change which parts these terms refer to --- that is, $P'_i$ is the part which contained $i$ at the start of this step, even though the $i$ has been moved to a different part.
\item Set $P(t)$ as equal to $P'$.
\end{itemize} 
\end{Mark}

Notice the similarity of this scheme to the definition of merging one partition into another (Definition \ref{def:merge}). That definition was created so that we may analyse this scheme in Section \ref{sec:calcs}.

For the definition of the partition $P(t)$ to make sense, we need the following fact.

\begin{Proposition}\label{prop:unioncycles}
As we run Marking Scheme \ref{mar:mult}, each part $P_i(t)$ is a union of cycles of the permutation $\pi_t$.
\end{Proposition}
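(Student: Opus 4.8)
The plan is an induction on $t$, carried out together with the claim that Marking Scheme \ref{mar:mult} is well defined up to time $t$; these are entangled because the construction of $P(t)$ already invokes the conclusion of this proposition at time $t-1$. The base case is immediate: $\pi_0$ is the identity, so each of its cycles is a singleton, and $P(0)$ consists of $n$ singletons, hence every part is (trivially) a union of cycles.

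For the inductive step, assume every part of $P(t-1)$ is a union of cycles of $\pi_{t-1}$ and write $\tau_t = \tp{i}{j}$. Recall the elementary fact that applying the transposition $\tp{i}{j}$ to a permutation splits the cycle through $i$ into two cycles (if $i$ and $j$ lie in a common cycle) or merges the cycles through $i$ and $j$ into one (if they do not), leaving every other cycle untouched. I would then go through the branches of the marking scheme. Whenever $P(t) = P(t-1)$ and $\al_t = 0$ we have $\pi_t = \pi_{t-1}$ and there is nothing to prove. The only other branch with $P(t) = P(t-1)$ is when $i$ and $j$ lie in a common part $Q = P_i(t-1) = P_j(t-1)$ and $\al_t = 1$; here the cycle or pair of cycles of $\pi_{t-1}$ affected by $\tp{i}{j}$ lie entirely inside $Q$ (as $Q$ is a union of cycles containing both $i$ and $j$), so after the split or merge those elements of $Q$ still form a union of cycles contained in $Q$, while every other part and each of its cycles is unchanged; thus $P(t) = P(t-1)$ is again compatible with $\pi_t$.

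It remains to treat the branch that genuinely modifies the partition, in which $i$ and $j$ lie in distinct parts, $P_i(t-1)$ is no larger than $P_j(t-1)$, and the cycle-moving procedure runs. By the inductive hypothesis $P_i(t-1)$ is a union of cycles of $\pi_{t-1}$ not containing $j$, so $i$ and $j$ lie in distinct cycles of $\pi_{t-1}$; hence if $\al_t = 1$ these two cycles merge in $\pi_t$ into a single cycle $C$, the union of the cycle through $i$ and the cycle through $j$, and all other cycles survive unchanged. The procedure moves some whole cycles of $\pi_{t-1}$ --- always including, as its first step, the cycle through $i$ --- out of $P_i(t-1)$ and into $P_j(t-1)$, changing no other part; and since cycles are only ever transferred from $P_i(t-1)$ to $P_j(t-1)$, starting from $|P_i(t-1)| \le |P_j(t-1)|$, the ratio $|P'_i|/|P'_j|$ stays in $[0,1]$ and the step is well defined. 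Checking the three kinds of parts of $P(t)$: a part other than the modified $P_i$ and $P_j$ is unchanged and holds only cycles of $\pi_{t-1}$ avoiding both $i$ and $j$, which remain cycles of $\pi_t$; the shrunken part $P_i(t)$ has had only whole cycles removed (including that of $i$), so it is a union of surviving cycles of $\pi_t$; and the enlarged part $P_j(t)$ is $P_j(t-1)$ together with whole $\pi_{t-1}$-cycles moved in, so when $\al_t = 0$ it is a union of cycles of $\pi_t = \pi_{t-1}$, and when $\al_t = 1$ it contains both the cycle of $i$ and the cycle of $j$, hence the merged cycle $C$, together with further unchanged cycles. This exhausts all cases.

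The only delicate point is the bookkeeping in the last paragraph: it is precisely because the cycle through $i$ is moved into $P_j$ at the first step of the procedure that the newly merged cycle $C$ of $\pi_t$ cannot straddle the boundary between $P_i(t)$ and $P_j(t)$. I would also note in passing the harmless edge case in which $P_i(t-1)$ is a single cycle and is emptied by the procedure, which causes no trouble since $P_i(t)$ always denotes whichever part currently contains $i$.
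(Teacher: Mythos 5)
Your proof is correct and follows essentially the same route as the paper: induction on $t$, using the fact that a transposition either splits one cycle or merges two, that the scheme only ever transfers whole cycles of $\pi_{t-1}$, and that the cycle through $i$ is moved into $P_j$ so a newly merged cycle cannot straddle two parts. The only difference is cosmetic organisation (you case-split on the branches of the marking scheme, the paper on $\al_t$), plus your harmless extra remark on well-definedness.
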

\begin{proof}
This is true for $t = 0$ because $P(t)$ has $n$ parts of size $1$. 

Assume that for each $i$, $P_i(t-1)$ is a union of cycles of $\pi_{t-1}$. By definition, $\pi_t = \pi_{t-1}\tau_t^{\al_t}$. 

If $\al_t = 0$, then $\pi_t$ and $\pi_{t-1}$ have the same cycles, and $P(t)$ was obtained from $P(t-1)$ by possibly combining two parts, and possibly moving some of these cycles from one part to another. This results in $P(t)$ being a union of cycles of $\pi_t$.

If $\al_t = 1$, then the cycles of $\pi_t$ are obtained from those of $\pi_{t-1}$ by either splitting one cycle in two or combining two cycles. In the former case, $P(t) = P(t-1)$, so if parts of this partition are unions of cycles of $\pi_{t-1}$, then they are unions of the finer partition whose parts are the cycles of $\pi_t$. In the latter case, only two cycles of $\pi_{t-1}$ are combined in $\pi_t$, and Marking Scheme \ref{mar:mult} guarantees that those two cycles are in the same part of $P(t)$.
\end{proof}

\begin{Proposition}\label{prop:mixed}
As we run Marking Scheme \ref{mar:mult}, the cards in each part $P_i$ are random. More precisely, among paths of length $t$ which result in any given partition $P(t)$, any permutations obtained by arbitrarily rearranging the cards in some or all of the parts $P_i(t)$ are equally likely.  
\end{Proposition}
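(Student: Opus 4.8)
The plan is to induct on $t$. Write $\Pi_Q$ for the set of permutations of $[n]$ in which every part of a partition $Q$ is a union of cycles --- equivalently, those permutations that restrict to a permutation of each part of $Q$, so that $\Pi_Q$ is naturally the product of the symmetric groups $\mathrm{Sym}(B)$ over the parts $B$ of $Q$. By Proposition \ref{prop:unioncycles} we always have $\pi_t\in\Pi_{P(t)}$, and the orbit of a permutation under the ``rearrange within parts'' operation is exactly all of $\Pi_{P(t)}$, so the proposition asserts precisely that, conditionally on $P(t)=Q$, the permutation $\pi_t$ is uniform on $\Pi_Q$. The case $t=0$ is immediate. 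For the inductive step, fix a partition $R$; the inductive hypothesis says that, conditionally on $P(t-1)=R$, the restrictions $\pi_{t-1}|_B$, over the parts $B$ of $R$, are independent and uniform on $\mathrm{Sym}(B)$. I would then condition in addition on the step $(\tau_t,\alpha_t)$, which given $R$ is independent of $\pi_{t-1}$, and split on the behaviour of Marking Scheme \ref{mar:mult}, writing $\tau_t=(i\,j)$.

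If the scheme leaves the partition unchanged --- in particular whenever $i$ and $j$ share a part of $R$ --- then $P(t)=R$, and $\pi_t=\pi_{t-1}\tau_t^{\alpha_t}$ equals $\pi_{t-1}$ unless $\alpha_t=1$ with $i,j$ in a common part $B$, in which case it is $\pi_{t-1}$ right-multiplied by the fixed element $(i\,j)\in\mathrm{Sym}(B)$; in both situations this is a bijection of $\Pi_R=\Pi_{P(t)}$ that preserves the uniform law. Since every pair $(R,\tau_t,\alpha_t)$ leading to a given value $Q$ of $P(t)$ will be shown to give the same conditional law of $\pi_t$, namely the uniform law on $\Pi_Q$, averaging over these finitely many pairs is harmless. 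The substantive case is when $i$ lies in a part $B$ of $R$, $j$ in a different part $C$, with $B$ the smaller, and the scheme merges them: it splits $B\cup C$ into a leftover part $X\subseteq B$, consisting of the elements in the cycles of $\pi_{t-1}$ that are not moved, and a part $Y=C\cup(B\setminus X)$; and $\pi_t$ agrees with $\pi_{t-1}$ off $B\cup C$, except that when $\alpha_t=1$ the transposition $(i\,j)$ fuses the cycle of $i$ with the cycle of $j$ inside $Y$.

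In this case a bare symmetry argument fails, because the probabilities with which the scheme selects the moved cycles depend on the cycle sizes of $\pi_{t-1}$ inside $B$. The point of having built Marking Scheme \ref{mar:mult} to mirror Definition \ref{def:merge} is precisely that, conditionally on $P(t-1)=R$, the cycle structures of $\pi_{t-1}|_B$ and $\pi_{t-1}|_C$ are those of uniformly random permutations of $B$ and $C$ (this is the inductive hypothesis, with Lemma \ref{lem:cycletype} giving the resulting cycle-type probabilities), and the scheme's update is exactly the operation of merging the cycle partition of $\pi_{t-1}|_B$ into that of $\pi_{t-1}|_C$. The proposition therefore reduces to the combinatorial claim that when the cycle partitions of two independent uniform permutations of disjoint sets are merged as in Definition \ref{def:merge}, then, conditionally on which elements end up in the leftover part $X$ and which in the combined part $Y$, the induced permutations of $X$ and of $Y$ are independent and uniform on $\mathrm{Sym}(X)$ and $\mathrm{Sym}(Y)$. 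Establishing this is the main obstacle, and it is exactly what the including-factor identities of Definition \ref{def:include} and Remark \ref{rem:include} are designed for; the computation is carried out in Section \ref{sec:calcs}. The delicate feature is that conditioning on the merge outcome reweights the cycle structure of $B\setminus X$, and this reweighting must combine with the contribution of $C$, and with the fusing transposition when $\alpha_t=1$, to reconstitute precisely the uniform law on $\mathrm{Sym}(Y)$ alongside the uniform law on $\mathrm{Sym}(X)$ on the leftover side; granting this, every contributing $(R,\tau_t,\alpha_t)$ yields the uniform law on $\Pi_Q$, and hence so does their average, which is the conclusion at time $t$.
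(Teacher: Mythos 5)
Your inductive frame and the easy cases are fine, but in the substantive case you reduce the proposition to a statement that is false, so the proof does not go through. You claim that every triple $(R,\tau_t,\alpha_t)$ leading to $P(t)=Q$ gives the uniform law on $\Pi_Q$ for $\pi_t$, so that averaging over such triples is harmless. Already conditioning on $P(t-1)=R$ alone this fails, for support reasons: given $P(t-1)=R$, the permutation $\pi_t$ lies in $\Pi_R\cup\Pi_R\tau_t$, which need not cover $\Pi_Q$. Take $R$ with parts $\{1,2\}$, $\{3,4\}$ and singletons, and $Q$ containing the part $\{1,2,3,4\}$: the permutation $(1\;3\;2\;4)$ belongs to $\Pi_Q$ but is not expressible as $\sigma$ or $\sigma\tau$ with $\sigma\in\Pi_R$ and $\tau$ a transposition joining the two parts, so it has conditional probability zero, while $(1\,2)(3\,4)$ has positive probability. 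This is exactly the paper's remark following Proposition \ref{prop:mixed}: the rearrangement invariance holds only when one conditions on $P(t)$ alone, and is destroyed by conditioning on finer history such as $P(t-1)$. Fixing $\tau_t$ causes a second, quantitative failure: with the same $R$ and $\tau_t=\tp{1}{3}$, the card $3$ is the smallest in its part, so Marking Scheme \ref{mar:mult} merges for both values of $\alpha_t$, and the cycle of $1$ is fused with the cycle of $3$ with probability $\f{1}{2}$ rather than the $\f{m}{m+1}=\f{2}{3}$ of Definition \ref{def:merge}; conditioning on the full merge, $\pi_t$ restricted to $\{1,2,3,4\}$ is the identity with probability $\f{1}{16}\neq\f{1}{24}$, so even the cycle type comes out wrong. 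The agreement between the scheme and Definition \ref{def:merge} is produced precisely by averaging over which card $j$ of the larger part was struck and over $\alpha_t$ --- that is the entire point of the rule ``$\alpha_t=0$ and $j$ is the smallest number in $P_j(t-1)$'' --- so the averaging you dismiss as harmless is where the content lies, and the per-$(R,\tau_t,\alpha_t)$ statement you would need is simply untrue.

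Relatedly, the claim you defer to Section \ref{sec:calcs} --- that conditional on which elements land in $X$ and in $Y$ the induced permutations are independent and uniform on the two symmetric groups --- is not what that section proves: Proposition \ref{prop:combineparts} is a statement about cycle types only, conditioned on the size $|\nu|$, and the paper is explicit that stronger element-level statements of this kind fail. The paper closes the gap differently: it obtains the correct cycle-type law for the new part (after the averaging described above), and then upgrades to invariance under rearranging cards within that part by conjugating entire walk paths by a permutation $\pi'$ of the part --- legitimate because the generating set is a conjugacy class and because this final conditioning is only on $P(t)$, not on $P(t-1)$ or $\tau_t$. Some argument of this kind, which in effect sums over all histories reaching $Q$, is indispensable; a step-by-step induction that fixes $R$, $\tau_t$ and $\alpha_t$ cannot yield uniformity on $\Pi_Q$, because the statement is false at that level of conditioning.
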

\begin{proof}
This is true at time $t = 0$, because each part $P_i(0)$ has size $1$ and there are no such rearrangements available.

Assume the result for time $t-1$. Let $\tau_t$ be the transposition $\tp{i}{j}$. There are several cases:
\begin{itemize}
\item The cards $i$ and $j$ are in the same part of $P(t-1)$. In this case, the distribution of the permutation $\pi_t$ is obtained from the distribution $\pi_{t-1}$ by multiplying by $\tp{i}{j}$. But for any permutation $\pi'$, $\pi_{t-1}$ is equally likely to be $\pi'$ as $\pi'\tp{i}{j}$, so the distribution of $\pi_t$ is the same as the distribution of $\pi_{t-1}$. In this case, $P(t)$ is equal to $P(t-1)$, and the claim is true.  
\item The cards $i$ and $j$ are in different parts of $P(t)$, $\al_t = 0$, and $P(t) = \mbox{$P(t-1)$}$. As with the previous case, the distribution of $\pi_t$ is the same as the distribution of $\pi_{t-1}$, and $P(t)$ is equal to $P(t-1)$.
\item The cards $i$ and $j$ are in different parts of $P(t)$, and $P(t)$ is not equal to $P(t-1)$. In this case, permutations obtained by rearranging cards only in parts other than $P_i(t-1)$ and $P_j(t-1)$ will still be equally likely, because such rearrangements commute with the transposition $\tp{i}{j}$. It remains to check that the distribution of $\pi_t$ is unchanged by rearranging cards in $P_i(t)$ (which is equal to $P_j(t)$, given that the distribution of $\pi_{t-1}$ was unchanged by rearranging cards in $P_i(t-1)$ and $P_j(t-1)$.

Checking this fact involves detailed calculations with the cycle types of uniformly random permutations, and is done in Section \ref{sec:calcs}. Proposition \ref{prop:combineparts} shows that the order of the elements of $P_i(t)$ according to $\pi_t$ has the correct distribution of cycle types to be a uniform distribution on all permutations of $P_i(t)$. The random transposition walk is generated by a conjugacy class, and for any permutation $\pi'$ of elements of $P_i(t)$, conjugating the entire random walk path by $\pi'$ preserves the partition $P(t)$ and conjugates $\pi_t$ by $\pi'$, so the distribution of $\pi_t$ conditioned on $P(t)$ is invariant under rearranging elements of $P_i(t)$, as required.
\end{itemize}
Subject to Proposition \ref{prop:combineparts}, this completes the proof.
\end{proof}

Notice that the change from considering partitions to permutations in the proof of Proposition \ref{prop:mixed} means that this proposition need not be true if we condition on the order in which cards are marked. For instance, while the paths which produce $P_1(3) = \{1,2,3,4\}$ in three steps are equally likely to produce any permutation of those four cards, the paths which produce $P(1) = (1,2)(3)(4)\cdots$ after one step, $P(2) = (1,2)(3,4)\cdots$ after a second step, and $P_1(3) = \{1,2,3,4\}$ after a third step do not have this property. Indeed, it is impossible for such a path to produce the permutation $(1 \; 3 \; 2 \; 4)$. Permutations like this one come from paths which build the set $P_1(3) = \{1,2,3,4\}$ in a different order.

\begin{Corollary}\label{cor:sst}
The time taken for Marking Scheme \ref{mar:mult} to produce a partition $P$ with only one part is a strong stationary time. That is, conditioned on $P(t) = [n]$, the distribution of $\pi_t$ is uniform on $S_n$.
\end{Corollary}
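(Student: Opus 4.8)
The plan is to obtain this as essentially the one-part special case of Proposition \ref{prop:mixed}, so almost all of the work has already been done.

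First I would record that Marking Scheme \ref{mar:mult} never splits a part of $P$: at each step the partition either stays equal to $P(t-1)$ or the two parts $P_i(t-1)$ and $P_j(t-1)$ are combined into a single part (possibly after some cycles of $\pi_{t-1}$ are shuffled between them beforehand). Hence the number of parts of $P(t)$ is non-increasing in $t$, and once $P(t) = [n]$ we have $P(s) = [n]$ for all $s \geq t$. Writing $T = \min\{t : P(t) = [n]\}$, this gives $\{P(t) = [n]\} = \{T \leq t\}$ and exhibits $T$ as a randomized stopping time for the walk, the auxiliary randomness being the cycle-moving coin flips in Marking Scheme \ref{mar:mult}. (That $T < \infty$ with probability one, and indeed that $\pr{P(t) = [n]} > 0$ for $t$ large, is clear since with positive probability the walk performs a specific sequence of non-lazy transpositions that fuse all parts into $[n]$, so the conditioning below is legitimate.)

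Next I would apply Proposition \ref{prop:mixed} with the partition in question being the one-part partition $\{[n]\}$. That proposition asserts that, among paths of length $t$ (together with the marking randomness) which result in a given $P(t)$, all permutations obtained from one another by arbitrarily rearranging the cards within the parts of $P(t)$ are equally likely. When $P(t) = [n]$ there is a single part, namely all of $[n]$, so ``rearranging the cards within the parts'' ranges over all of $S_n$. Therefore every element of $S_n$ is equally likely; that is, the conditional law of $\pi_t$ given $P(t) = [n]$ is uniform on $S_n$, which is the stationary distribution of Random Walk \ref{wal:rt2}. This is exactly the statement of the corollary, and it shows that $T$ is a strong stationary time; in particular the separation distance at time $t$ is at most $\pr{T > t}$.

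I do not expect a genuine obstacle here: all the real difficulty has been pushed back into Proposition \ref{prop:mixed}, and through it into Proposition \ref{prop:combineparts} and the partition combinatorics of Section \ref{sec:calcs}. The only point warranting a word of care is that we condition on the \emph{value} of $P(t)$ and not on the history of how $P$ was built up --- as the discussion following Proposition \ref{prop:mixed} emphasises, conditioning on that finer information would \emph{not} yield a uniform permutation, so the argument must be phrased in terms of $P(t)$ alone.
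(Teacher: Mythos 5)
Your proposal is correct and follows the same route as the paper, which derives the corollary as an immediate consequence of Proposition \ref{prop:mixed} applied to the one-part partition; your additional remarks (monotonicity of the number of parts and conditioning on the value of $P(t)$ rather than its history) are sound elaborations of the same argument.
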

\begin{proof}
This is an immediate consequence of Proposition \ref{prop:mixed}.
\end{proof}

Corollary \ref{cor:sst} shows that if we can bound the time taken until the partition $P(t)$ has only one part, then we will have a bound on the mixing time. Our next task is to analyse the time taken for this to happen. This cannot be faster than $n \log(n) + O(n)$, because that's how long it takes for each card to be moved, by a coupon collector calculation. Showing that this amount of time is enough proves our main result.

We first show that time $O(n)$ is enough to find a set of $\frac{n}{3}$ cards which are equally likely to be in any order (in those same positions, independent of the positions of other cards). This is the desired speed-up of Broder's approach, which takes $\frac{1}{2}n \log(n) + O(n)$ steps to get to this point. In the language of Marking Scheme \ref{mar:mult}, this happens when any part of $P(t)$ has size at least $\frac{n}{3}$.

\begin{Proposition}\label{prop:firsthalf}
The time taken for any one of the parts $P_i$ to grow to size at least $\frac{n}{3}$ is $O(n)$, in the sense that for any $\eps$, there is a constant $c$ so that after time $cn$, the probability that no part $P_i$ has ever been as large as $\frac{n}{3}$ is at most $\eps$.
\end{Proposition}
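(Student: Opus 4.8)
The plan is to pass from the parts of $P(t)$ to the cycles of $\pi_t$ and then invoke the emergence of a giant cycle. By Proposition \ref{prop:unioncycles}, each part $P_i(t)$ is a union of cycles of $\pi_t$, so if $\pi_{cn}$ has a single cycle of length at least $\tfrac n3$, then the part of $P(cn)$ containing that cycle already has size at least $\tfrac n3$. Hence it suffices to find, for each $\eps$, a constant $c$ with $\pr{\pi_{cn}\text{ has a cycle of length}\ge\tfrac n3}\ge 1-\eps$. To get at $\pi_{cn}$, note that under Random Walk \ref{wal:rt2} each step is, independently, either the identity (probability $\tfrac12$) or a uniformly random transposition, so $\pi_t$ has the law of a product of $N_t := \sum_{s\le t}\al_s\sim\mathrm{Bin}(t,\tfrac12)$ i.i.d.\ uniform transpositions; conditioning on $N_t=m$ makes $\pi_t$ exactly such a product with $m$ factors.

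Next I would invoke the (by now classical) fact that products of random transpositions undergo a giant-cycle phase transition, the cycle-structure analogue of the appearance of the giant component in the Erd\H{o}s--R\'enyi random graph: there is an absolute constant $\be_0$ and a sequence $\rho_n\to 0$ such that for every integer $m\ge\be_0 n$, a product of $m$ i.i.d.\ uniform transpositions on $[n]$ has a cycle of length at least $\tfrac n3$ with probability at least $1-\rho_n$. (Above criticality the longest cycle has length asymptotic to $\ga(m/n)\,n$, where $\ga(\be)$ is bounded away from $0$ as soon as $\be$ is bounded away from $\tfrac12$ and exceeds $\tfrac13$ once $\be$ is large enough; I would cite this, e.g.\ from Schramm's work on compositions of random transpositions and the associated random-transposition coalescent, rather than reprove it.)

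Assembling the pieces: fix $\eps$, let $\be_0$ be as above, and set $c=4\be_0$ and $t=cn$. Then $N_t\sim\mathrm{Bin}(4\be_0 n,\tfrac12)$ has mean $2\be_0 n$, so $\pr{N_t<\be_0 n}\le e^{-\Omega(n)}$ by Chernoff's inequality; on the complementary event, conditioning on the value $m=N_t\ge\be_0 n$, the permutation $\pi_t$ is a product of $m$ uniform transpositions and hence has a cycle of length at least $\tfrac n3$ with probability at least $1-\rho_n$. Therefore $\pr{\pi_{cn}\text{ has no cycle of length}\ge\tfrac n3}\le\rho_n+e^{-\Omega(n)}$, which is at most $\eps$ for all large $n$ (the finitely many remaining small $n$ being handled by enlarging $c$). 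Since the event that no part of $P(s)$ is ever as large as $\tfrac n3$ for $s\le cn$ is contained in the event that $\pi_{cn}$ has no cycle of length $\ge\tfrac n3$, this proves the proposition.

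The crux is the giant-cycle input of the second paragraph: this is a genuine theorem rather than a routine estimate, and it is where the content sits. The more direct-looking route --- coupling Marking Scheme \ref{mar:mult} with the union--find process of the applied transpositions, and bounding $|P_i(t)|$ below by the size of the Erd\H{o}s--R\'enyi component of $i$ --- does \emph{not} work: when a part is the smaller side of a merge, only the cycle of $i$ (together with possibly some further cycles of $\pi_{t-1}$) is moved, so a part can shed cycles into another part, and consequently two cards linked by an applied transposition, although in a common cycle (hence a common part) immediately afterwards, may later be separated once that cycle is split and their part donates. Working with the cycles of $\pi_t$ instead sidesteps this entirely, since the cycle decomposition of $\pi_t$ is unaffected by how those cycles are grouped into parts.
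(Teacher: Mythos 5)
Your reduction in the first paragraph is sound (parts are unions of cycles by Proposition \ref{prop:unioncycles}, and the largest part is non-decreasing, so the time-$cn$ event suffices), and the de-lazification via a binomial Chernoff bound is fine. The gap is the giant-cycle input itself: the claim that for $m\ge\be_0 n$ a product of $m$ uniform transpositions has a cycle of length at least $\tfrac n3$ with probability $1-\rho_n$, $\rho_n\to0$, is false. What Schramm's theorem gives above criticality is that the rescaled large cycle lengths converge to a Poisson--Dirichlet $(1)$ distribution on a fraction $z(\be)\le 1$ of the points; in particular the longest cycle is \emph{not} concentrated around a deterministic $\ga(\be)n$, but is asymptotically $z(\be)L_1 n$ with $L_1$ the (random) largest part of $\mathrm{PD}(1)$. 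Since $\pr{L_1<\tfrac13}$ equals the Dickman value $\rho(3)\approx 0.049>0$ (this is already the limiting probability that a \emph{uniform} permutation has no cycle of length $\ge n/3$), the failure probability of your key event is bounded below by a positive constant uniformly in $\be$, indeed uniformly in $m$ no matter how large. So for $\eps$ below roughly $0.049$ no choice of $c$ can work in your scheme: demanding a single cycle of size $\ge n/3$ is strictly stronger than demanding a part of size $\ge n/3$, and it is exactly the slack between ``one big cycle'' and ``a union of many small cycles forming a big part'' that the marking scheme exploits and your reduction throws away. (Your closing observation about why the naive Erd\H{o}s--R\'enyi component coupling fails is correct, but it does not rescue the main argument.)

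For contrast, the paper avoids cycle lengths entirely and tracks the length $l_t=l(\pi_t)$, i.e.\ $n$ minus the number of cycles of $\pi_t$. While every part of $P(t)$ has size less than $\tfrac n3$, two cards in the same cycle are in the same part, so the chance that a step splits a cycle is at most $\tfrac12\cdot\tfrac13=\tfrac16$, whereas two cards in different parts are in different cycles, so the chance that a step merges two cycles is at least $\tfrac12\cdot\tfrac23=\tfrac13$. Hence, up to the first time some part reaches size $\tfrac n3$, $l_t$ stochastically dominates a lazy walk with drift $\tfrac16$ upward; since $l_t\le n-1$ always, a Chebyshev bound shows that after $cn$ steps (with $c$ large depending on $\eps$) the domination must have broken, i.e.\ some part reached size $\tfrac n3$, with probability at least $1-\eps$. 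If you want to keep your structure, you would need to replace the giant-cycle theorem by an argument of this kind about the number of cycles (equivalently $l_t$), conditioned on all parts being small, rather than about the longest cycle of $\pi_{cn}$.
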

\begin{proof}
Consider the length $l_t = l(\pi_t)$ of the permutation of the cards at each step. While all parts have size less than $\frac{n}{3}$, the probability that the next step splits a cycle (reducing $l$ by one) is at most $\frac{1}{6}$, and the probability that the next step combines two cycles (increasing $l$ by one) is at least $\frac{1}{3}$. Otherwise, the walk doesn't move and $l$ is unchanged.

This means that while no part has grown to size $\frac{n}{3}$, the length $l$ is at least as large as the random walk $X_t$ which starts at $0$ and adds $1, 0,$ or $-1$ with probabilities $\f{1}{3}, \f{1}{2},$ and $\f{1}{6}$. This walk drifts upward at speed $\frac{1}{6}$ --- after $t$ steps, $X_t$ has expectation $E\f{t}{6}$ and variance $\f{17n}{36}$. Therefore, for any $\eps$, there is a constant $c$ so that after $cn$ steps, $X_{cn}$ has a probability of at least $(1-\eps)$ of being larger than $n$. But the quantity $l_t$ is at most $n-1$, and may be coupled with $X_t$ so that $l_t$ is at least $X_t$ as long as the partition $P$ has never had a part of size at least $\frac{n}{3}$. Together, these imply that after time $cn$ there is at least a probability of $1-\eps$ that $P$ has had a part of size at least $\frac{n}{3}$, which completes the proof.
\end{proof}

Continuing, we need to find the time taken for the rest of the cards to `get random'. This happens when the partition $P(t)$ has only a single part.

\begin{Proposition}\label{prop:secondhalf}
Consider the time $s$ between the partition $P(t)$ first containing a part of size at least $\frac{n}{3}$ and the partition $P(t+s)$ becoming the singleton partition $(n)$. The expected value of this time is $n\ln(n) + O(n)$, and the variance is $O(n^2)$.
\end{Proposition}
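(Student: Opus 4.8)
The plan is to track the growth of the largest part of $P(t)$ once it has reached size $\frac{n}{3}$, and argue that each step has a good chance of enlarging it by absorbing another part. Let me write $L(t)$ for the size of the largest part of $P(t)$ (with ties broken as in Definition \ref{def:parts}), and work from the first time $L \geq \frac{n}{3}$. I would first establish that from this point on, the largest part never shrinks and, more importantly, that whenever the walk picks a transposition $\tp{i}{j}$ with exactly one of $i,j$ in the current largest part, the marking scheme has a constant probability (bounded below, e.g.\ by $\frac14$, accounting for the laziness coin and the tie-breaking rule in Marking Scheme \ref{mar:mult}) of merging the other part into it. Since the largest part has size at least $cn$ with $c \geq \frac13$, the probability that a uniformly random transposition straddles it and some smaller part is $\Theta(1) \cdot \frac{n - L}{n}$, so when there are $m$ cards still outside the largest part, the expected waiting time for the next absorption is $O(n/m)$. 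Summing $\sum_{m=1}^{2n/3} O(n/m) = n\ln n + O(n)$ gives the claimed expectation.

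The key steps, in order, are: (1) show the largest part is monotone nondecreasing in size after it first reaches $\frac n3$, and that no \emph{other} part can ever overtake it (both follow from Marking Scheme \ref{mar:mult}: parts only merge, never split, and a merge always moves cards into the not-larger part --- one must check the tie-breaking is consistent, so that ``the'' largest part is well-defined and stable); (2) for a fixed configuration with largest part of size $L = n - m$, lower-bound by an absolute constant $\delta > 0$ the conditional probability that a given step, \emph{given} that it changes $P$, attaches at least one new card to the largest part --- here I use that a random transposition hits the largest part and a smaller part with probability $\ge \delta' \frac{m}{n}$, and conditioned on such a choice the scheme performs a merge with probability $\ge \delta''$; (3) dominate the number of cards outside the largest part by a pure-death-type chain whose rate of decrease, when $m$ cards remain, is $\Theta(m/n)$ per step, so that the time to go from $m$ to $m-1$ is stochastically at most a geometric variable with mean $O(n/m)$; (4) sum the means to get $\sum_{m=1}^{\lceil 2n/3\rceil} O(n/m) = n\ln n + O(n)$ for the expectation; (5) for the variance, use independence of the successive geometric waiting times (after conditioning suitably) and $\operatorname{Var} \le \sum_m O(n/m)^2 = O(n^2)$, together with the fact that a single merge can absorb several parts at once (which only helps, and is handled by a domination argument). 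One subtlety for the lower bound on the expectation, needed to pin the constant in front of $n \ln n$ rather than merely getting $O(n \log n)$: when $m$ is small the straddling probability is close to $\frac{2m}{n}$ and the merge probability close to $1$ (since almost every cycle of $\pi_{t-1}$ outside the big part gets swept in, by the ``with probability $|P_i'|/|P_j'|$'' loop which is near-certain to continue when $|P_j'|$ is large), so the per-step decrease rate is $(1+o(1))\frac{2m}{n} \cdot \frac12 \cdot (1+o(1))$ and the waiting-time mean is $(1+o(1)) \frac{n}{m}$, giving a matching lower bound $n \ln n - O(n)$.

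The main obstacle I anticipate is step (2)–(3): controlling the process by a clean birth–death chain is complicated by the fact that the number of cards outside the largest part does not decrease by exactly one per successful step --- a single application of Marking Scheme \ref{mar:mult} merges two parts and then, via the iterated ``move a random cycle with probability $|P_i'|/|P_j'|$'' loop, may pull in an arbitrary further collection of cycles. I would handle this by noting that these extra absorptions only accelerate the process, so for the \emph{upper} bound on the time I may pessimistically pretend each successful step absorbs exactly one card, while for matching the leading constant in the \emph{lower} bound I must show these bulk absorptions do not speed things up by more than a $1+o(1)$ factor when $m$ is small, which is where I would invoke that the ``smaller'' parts are then genuinely tiny compared to the largest part. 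The other place requiring care is verifying that the constant-probability claim in step (2) survives the tie-breaking conventions and the possibility $\al_t = 0$; this is a finite case-check against the definition of Marking Scheme \ref{mar:mult} and I expect it to go through with room to spare.
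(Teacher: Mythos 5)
Your proposal follows essentially the same route as the paper: track the size of the largest part of $P(t)$, note that it is nondecreasing, lower-bound the per-step probability that it grows, dominate the total time by a sum of (conditionally independent) geometric waiting times, and sum means and variances. The skeleton is right, but two details as written do not deliver the stated bound and one auxiliary claim is false. The bookkeeping in your steps (2)--(3), with unspecified constants $\delta',\delta''<1$, only gives $O(n\log n)$; and your patch, ``rate $(1+o(1))\frac{m}{n}$ when $m$ is small,'' still leaves an error of order $\eps\, n\log n$ unless the closeness is quantified uniformly. The clean and uniform statement, which is what the paper uses, is exact: if the largest part has size $k$, there are $k(n-k)$ transpositions joining it to its complement, each chosen \emph{and applied} with probability $\frac{1}{n(n-1)}$, and whenever $\al_t=1$ and the two cards lie in different parts the merge is mandatory, so the largest part grows with probability at least $\frac{k(n-k)}{n(n-1)}$; then $\sum_{k\geq n/3}\frac{n(n-1)}{k(n-k)}=(n-1)\sum_{k\geq n/3}\left(\frac{1}{k}+\frac{1}{n-k}\right)=n\ln n+O(n)$, with the harmful factor $\frac{n}{n-k}\leq 3$ only ever multiplying terms that sum to $O(n)$. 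Your justification of the near-$1$ merge probability via the sweeping loop is both unnecessary and backwards: the loop continues with probability $\frac{|P'_i|}{|P'_j|}$, which is \emph{small} when the receiving part is large, so typically only the cycle containing $i$ is absorbed; this affects how much the part grows, never whether it grows.

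Also, your step (1) overclaims: the identity of the largest part is not stable (two medium parts can merge and overtake it), and in Marking Scheme \ref{mar:mult} cards move into the \emph{larger} of the two parts involved, not the smaller. Neither error is fatal, since only the monotonicity of the maximum part size is used, and that does hold. Finally, the matching lower bound you sketch is not needed for the paper's purposes (and is not what the paper proves); only the upper bound on the expectation and the $O(n^2)$ variance bound feed into the mixing-time corollary, the variance being handled exactly as you propose, by summing the variances of the dominating geometrics.
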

\begin{proof}
Consider the size of the largest part of $P$. At each step, Marking Scheme \ref{mar:mult} cannot decrease the size of this largest part, and may increase it. If the largest part has size $k$, then there is a probability of at least $\frac{k(n-k)}{n(n-1)}$ that the next step increases the size of the largest part by at least one, by transposing a card not in this part with a card that is in this part. The expected time between having a part of size at least $\frac{n}{3}$ and having all of the cards in a single part is \begin{align*}&\sum_{k=\frac{n}{3}}^{n-1}\frac{n(n-1)}{k(n-k)} \\ &= (n-1)\sum_{k=\frac{n}{3}}^{n-1}\left(\frac{1}{k}+\frac{1}{n-k}\right) \\ &= (n-1)\left(\sum_{k=1}^{n-1}\frac{1}{k}+\sum_{k=\frac{n}{3}}^{\frac{2n}{3}}\frac{1}{k}\right) \\ &= (n-1)\left(\ln(n-1) + \ln(\frac{2n}{3}) - \ln(\frac{n}{3})\right) + O(n) \\ &= n\ln(n) + O(n) \end{align*}

The calculation for the variance is similar. The variance of the entire time is at most the sum from $k=\frac{n}{3}$ to $k=n-1$ of the variance of the geometric random variable with probability $\frac{k(n-k)}{n(n-1)}$. Expanding this sum in the same way gives the result.
\end{proof}

\begin{Corollary}
The separation distance mixing time of Random Walk \ref{wal:rt2} is at most $n\ln(n) + O(n)$, in the sense that for any $\eps$ there is a constant $c$ so that after time $n\ln(n) + cn$, the separation distance from the uniform distribution is at most $\eps$.
\end{Corollary}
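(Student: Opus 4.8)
The plan is to read this off from Corollary~\ref{cor:sst} together with Propositions~\ref{prop:firsthalf} and~\ref{prop:secondhalf}, using the standard fact that a strong stationary time stochastically dominates separation distance. Let $T$ denote the time at which Marking Scheme~\ref{mar:mult} first produces the one-part partition $(n)$. By Corollary~\ref{cor:sst}, $T$ is a strong stationary time for Random Walk~\ref{wal:rt2}, so for every $t$ the separation distance between the law of $\pi_t$ and the uniform distribution on $S_n$ is at most $\pr{T > t}$. Hence it suffices to show that for every $\eps$ there is a constant $c$ with $\pr{T > n\ln n + cn} \leq \eps$.

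To do this I would write $T = T_1 + T_2$, where $T_1$ is the first time some part $P_i$ reaches size at least $\frac{n}{3}$ and $T_2$ is the further time until the partition becomes $(n)$, i.e. the quantity $s$ of Proposition~\ref{prop:secondhalf}. Proposition~\ref{prop:firsthalf} supplies a constant $c_1$ with $\pr{T_1 > c_1 n} \leq \eps/2$. Proposition~\ref{prop:secondhalf} gives $\expect{T_2} = n\ln n + O(n)$ and $\mathrm{Var}(T_2) = O(n^2)$, so Chebyshev's inequality supplies a constant $c_2$ with $\pr{T_2 > n\ln n + c_2 n} \leq \eps/2$. Taking $c = c_1 + c_2$ and applying a union bound to $\{T_1 > c_1 n\} \cup \{T_2 > n\ln n + c_2 n\}$ bounds $\pr{T > n\ln n + cn}$ by $\eps$, which is the claim.

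The only point needing a little care --- and the nearest thing to an obstacle here --- is that Proposition~\ref{prop:secondhalf} must be applied to $T_2$ as a genuine random time measured from the random configuration present at time $T_1$, not merely as an expectation for one fixed starting partition. This is fine because the lower bound $\frac{k(n-k)}{n(n-1)}$ on the probability that the largest part grows depends only on the current size $k$ of the largest part, so the stochastic domination of $T_2$ by a sum of independent geometric variables is uniform over all starting configurations having a part of size at least $\frac{n}{3}$; the mean and variance estimates, and hence the Chebyshev bound, then hold unconditionally. With that noted, the corollary follows, since all the substantive work lies in the earlier propositions.
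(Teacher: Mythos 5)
Your argument is correct and is essentially the paper's proof: the paper simply cites Propositions \ref{prop:firsthalf} and \ref{prop:secondhalf} with Chebyshev's inequality for the latter, and you have filled in the standard details (the strong stationary time bound on separation distance from Corollary \ref{cor:sst}, the split $T = T_1 + T_2$, and the union bound). Your remark that the geometric-domination bound in Proposition \ref{prop:secondhalf} is uniform over starting configurations with a part of size at least $\frac{n}{3}$ is a worthwhile point of care, but it does not change the route.
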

\begin{proof}
This is a consequence of Propositions \ref{prop:firsthalf} and \ref{prop:secondhalf}, using Chebyshev's inequality with the latter.
\end{proof}

Together with the usual coupon collector lower bound of $n\ln(n) + O(n)$ (for our lazier version of the walk), this gives cutoff in separation distance for the random transposition walk. All that remains is to check the results used in Proposition \ref{prop:mixed}.

\section{The key result}
\label{sec:calcs}

In the previous section, we often found ourselves in the position of having two sets of shuffled cards of sizes $m$ and $n$ with $m \geq n$, while being about to transpose a card from one set with a card from the other set. We would like to understand what happens after such a transposition is made --- what can we say about the possible orders of all $m+n$ cards afterwards? 

The strongest result that could be hoped for would be that all $m+n$ cards are random. Not all elements of $S_{m+n}$ may be obtained by multiplying elements of $S_n$ and $S_m$ and a transposition, if the identifications of the smaller symmetric groups with subgroups of the larger are fixed, so perhaps we might hope only hope that this produces the correct distribution on cycle types of the resulting permutation. For applications to the random transposition walk, a result on cycle types will be sufficient, because this random walk is generated by a conjugacy class, so all permutations with a given cycle type are equally likely.

This result on partitions is also not true, as may be seen by considering the probability that the resulting permutation is a single $(m+n)$--cycle --- to produce an $(m+n)$--cycle, the permutations of the initial $m$ and $n$ cards should be an $m$--cycle and an $n$--cycle, which happen with probabilities $\frac{1}{m}$ and $\frac{1}{n}$, and then any transposition between the two sets will result in an $(m+n)$--cycle. But the probability that a random permutation in $S_{m+n}$ is an $(m+n)$--cycle is $\frac{1}{m+n}$.

We will prove a yet weaker result of this type. Rather than concluding that all $m+n$ cards are random, we provide a (random) algorithm for dividing the $m+n$ cards into $m+k$ cards and $n-k$ cards, for a random $k$, so that the permutations in each set have random cycle types. Because $m$ was at least $n$, this may be seen as an improvement in how much of the deck is random, and we will use this result repeatedly to bound the time taken until the entire deck is random, in Marking Scheme \ref{mar:mult} and Corollary \ref{cor:sst}. This idea is shown in Examples \ref{ex:add1}, \ref{ex:add2}, and \ref{ex:add3}.

\begin{Example}\label{ex:add1}
Consider a deck of $5$ cards, with the top four cards being the cards $1$ to $4$ in a random order, and the last card being card $5$. With equal probabilities of $\frac{1}{5}$, swap card $5$ with any other card, or leave it in place. Then the whole deck is in a random order.

Building larger and larger random permutations by using this fact repeatedly is the technique used in \cite{Broder} to give a strong stationary time for the random transposition walk.
\end{Example}

\begin{Example}\label{ex:add2}
Consider a deck of $6$ cards, with the top four cards being the cards $1$ to $4$ in a random order, and the last two cards being card $5$ and $6$ in a random order. With equal probabilities of $\frac{1}{10}$, swap card $5$ or $6$ with any of the cards $1$ to $4$, or leave the deck in its original order with probability $\frac{2}{10}$. 

Mark the cards $1$ to $4$, and mark some of the other two cards as follows. 
\begin{itemize}
\item If the cards $5$ and $6$ were in their respective positions $5$ and $6$ to start, then 
\begin{itemize}
\item With probability $\frac{1}{5}$ set $k=2$ and mark the $5$ and $6$. 
\item Otherwise (probability $\frac{4}{5}$) set $k=1$ and mark whichever of the $5$ and $6$ was moved, or one at random if the order wasn't changed. 
\end{itemize}
\item Otherwise (the cards $5$ and $6$ were in positions $6$ and $5$ to start), 
\begin{itemize}
\item Mark the $5$ and $6$.
\end{itemize}
\end{itemize}

Then conditioned on $k$, the cycle type of the permutation of the $4+k$ marked cards is in distribution the same as the cycle type of a uniformly random permutation from $S_{4+k}$. That is, this algorithm produces either $5$ or $6$ marked cards, but conditioned on this number, the distribution of their cycle types is correct. 
\end{Example}

We will use the idea of Example \ref{ex:add2} to mark cards more rapidly than Broder's scheme. We will need to be careful, because what may deduced from a set of cards being marked differs between different marking schemes, as does which events are being conditioned on in the analysis.

To show what sorts of phenomena occur when we move to larger numbers, we repeat Example \ref{ex:add2} in a situation where up to three additional cards may be marked.

\begin{Example}\label{ex:add3}
Consider a deck of $7$ cards, with the top four cards being the cards $1$ to $4$ in a random order, and the last three cards being cards $5$, $6$, and $7$, in a random order. With equal probabilities of $\frac{1}{15}$, swap card $5$, $6$, or $7$ with any of the cards $1$ to $4$, or choose one of the cards $5$, $6$, or $7$ but leave the deck in its original order. 

Mark the cards $1$ to $4$, and mark some of the other three cards as follows. 
\begin{itemize}
\item If the original permutation of the cards $5$ to $7$ was a $3$--cycle, mark all three cards and set $k=3$.
\item If the original permutation of the cards $5$ to $7$ was a $2$--cycle $\tp{i}{j}(h)$ and one of the cards $i$ and $j$ was moved or chosen but not moved, then
\begin{itemize}
\item With probability $\frac{1}{6}$ set $k=3$ and mark all three cards. 
\item Otherwise (probability $\frac{5}{6}$) set $k=2$ and mark cards $i$ and $j$.
\end{itemize}
\item If the original permutation of the cards $5$ to $7$ was a $2$--cycle $\tp{i}{j}(h)$ and $h$ was either moved or chosen but not moved, then
\begin{itemize}
\item With probability $\frac{2}{5}$ set $k=3$ and mark all three cards. 
\item Otherwise (probability $\frac{3}{5}$) set $k=1$ and mark card $h$.
\end{itemize}
\item Otherwise, if the original permutation of the cards $5$ to $7$ was the identity $(5)(6)(7)$ and card $i$ was either moved or chosen but not moved, then
\begin{itemize}
\item With probability $\frac{1}{15}$ set $k=3$ and mark all three cards.
\item With probability $\frac{1}{3}$ set $k=2$ and mark card $i$ and a random one of the other two cards.
\item Otherwise (probability $\frac{3}{5}$) set $k=1$ and mark card $i$.
\end{itemize}
\end{itemize}

Then conditioned on $k$, the cycle type of the permutation of the $4+k$ marked cards is in distribution the same as the cycle type of a uniformly random permutation from $S_{4+k}$. This algorithm produces $5$, $6$, or $7$ marked cards and conditioned on this number, the distribution of their cycle types is as that of a uniform permutation.
\end{Example}

We now move to the general case.

Somewhat awkwardly, the ideal setting for the following calculations seems to be somewhere in between considering permutations and considering their cycle types. We will work with partitions, but often various terms will be multiplied by factors indicating that the term is really counting something to do with objects with a little more structure, like a partition with a choice of part, or a partition with an order on some of its parts.

\begin{Proposition}\label{prop:combineparts}
Let $\la$ and $\mu$ be partitions of $m$ and $n$, respectively, with $m \geq n$. Create new partitions $\nu$ and $\xi$ by merging $\mu$ with $\la$, in the sense of Definition \ref{def:merge}. Note that $\nu$ is a partition of a random integer between $m+\mu_0$ and $m+n$, and $\xi$ is a partition of $m+n - |\nu|$.

If $\lambda$ and $\mu$ are the cycle types of independent uniformly random elements of $S_m$ and $S_n$, then for any fixed $k \leq n$, conditioned on $|\nu| = n+k$, the distributions of $\nu$ and $\xi$ are the distributions of cycle types of uniformly random elements of $S_{m+k}$ and $S_{n-k}$, and $\nu$ and $\xi$ are independent of one another.
\end{Proposition}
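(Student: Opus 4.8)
The plan is to verify that the merge operation, when applied to cycle types of independent uniform permutations, reproduces the correct conditional distributions. The organizing principle is Lemma~\ref{lem:cycletype}: a partition $\rho$ of $N$ arises as the cycle type of a uniform element of $S_N$ with probability $1/\prod_i i^{a_i(\rho)}(a_i(\rho))!$. So the goal reduces to a purely combinatorial identity: starting from the product measure on $(\la,\mu)$ given by Lemma~\ref{lem:cycletype} for $S_m \times S_n$, and applying the random merge of Definition~\ref{def:merge}, the joint law of $(\nu,\xi)$ restricted to the event $\{|\nu| = m+k\}$ should factor as the product of the $S_{m+k}$-cycle-type measure on $\nu$ and the $S_{n-k}$-cycle-type measure on $\xi$, up to the overall constant $\pr{|\nu|=m+k}$.

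First I would fix a target pair $(\nu,\xi)$ with $|\nu| = m+k$, $|\xi| = n-k$, and sum over all ways the merge could have produced it. The parts of $\xi$ must have come from $\mu$ untouched, so $\xi$ is a subpartition of $\mu$; write $\mu = \xi \cup \mu'$ where $\mu'$ (of size $k$) is the multiset of parts of $\mu$ that got absorbed into $\la$. The partition $\nu$ is then obtained from $\la$ by combining the parts of $\mu'$ into it — either adjoining a part of $\mu'$ as a brand-new part of $\nu$, or adding it onto an existing part. So I would sum over: (i) the choice of $\la$ (equivalently, over which parts of $\nu$ are "original $\la$-parts" versus built up by absorption), (ii) the choice of which parts of $\mu'$ landed where, and (iii) the internal orderings that the merge process uses (the role of $\mu_0$ and the sequential appending governed by the including factor $f$). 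Definition~\ref{def:include} and Remark~\ref{rem:include} are designed precisely to package step (iii): the probability that a given labelled subpartition $\mu'$ of $\mu$ is the absorbed set, with a given assignment of its parts to slots of $\nu$, is a product of an including factor $I(b)$ against the $\frac{m}{m+1}$ / $\frac{1}{m+1}$ weights and the proportional-to-size choices. The main computation is to show that when all of these weights are multiplied against the Lemma~\ref{lem:cycletype} probability of $(\la,\mu)$ and summed over the combinatorial choices, the $\la$-dependence collapses and one is left with exactly $\frac{1}{\prod_i i^{a_i(\nu)}(a_i(\nu))!}\cdot\frac{1}{\prod_i i^{a_i(\xi)}(a_i(\xi))!}$ times a factor depending only on $k$ (which is then identified as $\pr{|\nu|=m+k}$ by summing over all $(\nu,\xi)$).

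The heart of the matter — and the step I expect to be the real obstacle — is the algebraic identity that makes the $\la$-sum telescope. Concretely, the probability of $\nu$ as an $S_{m+k}$-cycle type is a single clean product, but our expression for it is a sum over all "histories": over which parts of $\nu$ were present in $\la$ and which were assembled, and in the latter case over the compositions of those part-sizes into $\la$-part plus absorbed $\mu'$-parts, weighted by the $i^{a_i}$-denominators from Lemma~\ref{lem:cycletype} for $\la$ and by the including factors and proportional weights from the merge. One must check this sum equals the target product. I anticipate this is where the specific numerical choices in Definition~\ref{def:merge} (the $\frac{m}{m+1}$, the $\frac{|\xi|}{|\nu|}$, the proportional-to-size rules) are forced, and the verification will likely proceed by induction on $|\mu'| = k$, peeling off one absorbed part of $\mu$ at a time: the $k \to k+1$ step should reduce, via the recursive structure of $f(m,\mu,\mu_0)$, to the elementary fact that a uniform $S_{N}$-permutation, conditioned on which cycle contains a marked point and on deleting that cycle, leaves a uniform $S_{N-j}$-permutation on the rest — exactly the observation flagged in the Remark following Definition~\ref{def:merge}. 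Once the inductive step is set up correctly the bookkeeping is routine but heavy; the binomial factors $\binom{b_i+c_i}{b_i}$ of Remark~\ref{rem:include} are what convert between labelled and unlabelled partitions and must be tracked carefully so that the $a_i(\nu)!$ and $a_i(\xi)!$ in the denominators come out with the right multiplicities. Independence of $\nu$ and $\xi$ given $\{|\nu|=m+k\}$ then falls out for free, since the final expression is already a product of a $\nu$-factor and a $\xi$-factor.
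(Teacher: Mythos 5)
Your proposal follows essentially the same route as the paper's proof: fix a target pair $(\nu,\xi)$, sum the merge probabilities over all histories $(\la,\mu,\mu_0)$ using Lemma \ref{lem:cycletype} together with the including factors and binomial conversion of Remark \ref{rem:include}, divide by the product of the uniform cycle-type probabilities on $S_{m+k}$ and $S_{n-k}$, and show the ratio depends only on $k$ --- which is exactly the paper's reduction to Proposition \ref{prop:subparts}, itself proved by the induction you anticipate (conditioning on the last part of $\mu$ transferred, so the recursion built into $f(m,\mu,\mu_0)$ closes the induction), with independence of $\nu$ and $\xi$ falling out of the product form just as you say. Two small cautions: the central identity you call the heart of the matter is only flagged, not executed, and your phrasing about summing over ``compositions of part-sizes into $\la$-part plus absorbed $\mu'$-parts'' overstates the set of histories, since under Definition \ref{def:merge} at most one part of $\nu$ is assembled, namely $\la_0+\mu_0$ with a single $\mu$-part, which only simplifies the bookkeeping you describe.
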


The two cases could be seen as merging a random part of $\la$ with a random part of $\mu$, choosing parts from each partition with probabilities proportional to part size, with a single chance of choosing an empty part from $\la$, with case 2 of Definition \ref{def:merge} corresponding to choosing this empty part. 

Before we embark on the proof of Proposition \ref{prop:combineparts}, we give some calculations of these probabilities in small cases. Table \ref{tab:combineparts5} shows the calculation for every possibility in the case $m = 3$, $n = 2$, and $k = 2$. An example of this size does not illustrate all possible behaviours, so Table \ref{tab:combineparts9} shows the same calculation for just a few possibilities in the case $m = 5$, $n = 4$, and $k = 4$. Both of these examples have $k = n$ and so $|\xi| = 0$. The following remark justifies choosing only examples with $|\xi| = 0$ with reference to the proof of the present proposition.

\begin{Remark}
Surprisingly, allowing $\xi$ to have size larger than zero has almost no impact on the calculations --- in the calculation we are about to start, it gives the initial factor of $$\frac{m-n+2k}{m+k}\prod_{i=1}^n \binom{b_i+c_i}{b_i},$$ which mostly cancels out when we divide by the probability of the partition $\xi$, leaving a remainder which only depends on $\nu$ and $\xi$ in that it depends on $k$, and thus is ignored when we reduce to Proposition \ref{prop:subparts}.
\end{Remark}

\bgroup
\def\arraystretch{1.3}
\begin{table}\resizebox{\textwidth}{!}{%
$\begin{array}{cccccccccc}
\nu & \xi & \la & \mu & \Pr(\la) & \Pr(\mu) & \Pr(\text{Join}) & \Pr(\text{Others}) & \text{Prob.} & \text{Total} \\
(5) & \emptyset & (3) & (2) & \f{1}{3} & \f{1}{2} & \f{3}{4} & & \f{1}{8} & \f{24}{196} \\
\hline
(4,1) & \emptyset & (3) & (1,1) & \f{1}{3} & \f{1}{2} & \f{3}{4} & \f{1}{4} & \f{1}{32} & \\
 & & (2,1) & (2) & \f{1}{2} & \f{1}{2} & \f{1}{2} & & \f{1}{8} & \f{30}{196} \\
\hline
(3,2) & \emptyset & (3) & (2) & \f{1}{3} & \f{1}{2} & \f{1}{4} & & \f{1}{24} & \\
&  & (2,1) & (2) & \f{1}{2} & \f{1}{2} & \f{1}{4} & & \f{1}{16} & \f{20}{196} \\
\hline
(3,1,1) & \emptyset & (3) & (1,1) & \f{1}{3} & \f{1}{2} & \f{1}{4} & \f{1}{4} & \f{1}{96} & \\
 & & (2,1) & (1,1) & \f{1}{2} & \f{1}{2} & \f{1}{2} & \f{1}{4} & \f{1}{32} & \\
 & & (1,1,1) & (2) & \f{1}{6} & \f{1}{2} & \f{3}{4} & & \f{1}{16} & \f{20}{196} \\
\hline
(2,2,1) & \emptyset & (2,1) & (2) & \f{1}{2} & \f{1}{2} & \f{1}{4} & & \f{1}{16} & \\
 & & (2,1) & (1,1) & \f{1}{2} & \f{1}{2} & \f{1}{4} & \f{1}{4} & \f{1}{64} & \f{15}{196} \\
\hline
(2,1,1,1) & \emptyset & (2,1) & (1,1) & \f{1}{2} & \f{1}{2} & \f{1}{4} & \f{1}{4} & \f{1}{64} & \\
 & & (1,1,1) & (2) & \f{1}{6} & \f{1}{2} & \f{3}{4} & \f{1}{4} & \f{1}{64} & \\
 & & (1,1,1) & (1,1) & \f{1}{6} & \f{1}{2} & \f{1}{4} & & \f{1}{48} & \f{10}{196} \\
\hline
(1,1,1,1,1) & \emptyset & (1,1,1) & (1,1) & \f{1}{6} & \f{1}{2} & \f{1}{4} & \f{1}{4} & \f{1}{192} & \f{1}{196} \\
\end{array}$}
\caption{The probabilities of producing each partition $\nu$ of size $5$ (and $\xi$ an empty partition) from random partitions $\la$ and $\mu$ of sizes $3$ and $2$ according to the procedure in Proposition \ref{prop:combineparts}. Notice that the probabilities for each partition, in the final column, are proportional to the size of that conjugacy class in the symmetric group $S_5$, confirming that proposition for $m = \left|\la\right| = 3$, $n = \left|\mu\right| = 2$, and $k = \left|\nu\right| - m = 2$.}
\label{tab:combineparts5}
\end{table}

\begin{table}\resizebox{\textwidth}{!}{%
$\begin{array}{ccccccccccc}
\nu & \xi & \la & \mu & \mu_0 & \Pr(\la) & \Pr(\mu) & \Pr(\text{Join}) & \Pr(\text{Others}) & \text{Prob.} & \text{Total} \\
(7,1,1) & \emptyset & (5) & (2,1,1) & & \f{1}{5} & \f{1}{4} & \f{5}{12} & \f{2}{7}\cdot\f{1}{8} & \f{1}{1344} &  \\
 &  & (4,1) & (3,1) & & \f{1}{4} & \f{1}{3} & \f{1}{2} & \f{1}{8} & \f{1}{192} &  \\
 &  & (3,1,1) & (4) & & \f{1}{6} & \f{1}{4} & \f{1}{2} & & \f{1}{48} & \f{30}{1120}  \\
\hline
(6,2,1) & \emptyset & (5) & (2,1,1) & & \f{1}{5} & \f{1}{4} & \f{5}{12} & \f{2}{6}\cdot\f{1}{8}+\f{1}{6}\cdot\f{2}{7} & \f{5}{2688} &  \\
& & (4,1) & (2,2) & & \f{1}{4} & \f{1}{8} & \f{2}{3} & \f{2}{7} & \f{1}{168} &  \\
& & (3,2) & (3,1) & & \f{1}{6} & \f{1}{3} & \f{3}{8} & \f{1}{8} & \f{1}{384} &  \\
& & (2,2,1) & (4) & & \f{1}{8} & \f{1}{4} & \f{2}{3} & & \f{1}{48} & \f{35}{1120} \\
\hline
(5,2,1,1) & \emptyset & (5) & (2,1,1) & 2 & \f{1}{5} & \f{1}{4} & \f{1}{12} & \f{2}{6}\cdot\f{1}{8}+\f{1}{6}\cdot\f{2}{7} & \f{1}{2688} &  \\
& & (5) & (2,1,1) & 1 & \f{1}{5} & \f{1}{4} & \f{1}{12} & \f{2}{7}\cdot\f{1}{8} & \f{1}{6720} &  \\
& & (4,1) & (2,1,1) & & \f{1}{4} & \f{1}{4} & \f{1}{3} & \f{2}{6}\cdot\f{1}{8}+\f{1}{6}\cdot\f{2}{7} & \f{5}{2688} &  \\
& & (3,2) & (2,1,1) & & \f{1}{6} & \f{1}{4} & \f{1}{4} & \f{2}{7}\cdot\f{1}{8} & \f{1}{2688} &  \\
& & (3,1,1) & (2,2) & & \f{1}{6} & \f{1}{8} & \f{1}{2} & \f{2}{7} & \f{1}{336} &  \\
& & (2,2,1) & (3,1) & & \f{1}{8} & \f{1}{3} & \f{1}{2} & \f{1}{8} & \f{1}{384} &  \\
& & (2,1,1,1) & (4) & & \f{1}{12} & \f{1}{4} & \f{1}{2} & & \f{1}{96} & \f{21}{1120} \\
\end{array}$}
\caption{The probabilities of producing some partitions $\nu$ of size $9$ (and $\xi$ an empty partition) from random partitions $\la$ and $\mu$ of sizes $5$ and $4$ according to the procedure in Proposition \ref{prop:combineparts}. The probabilities for each partition, in the final column, are proportional to the size of the corresponding conjugacy class in the symmetric group $S_9$. The cases in this table are all the possible partitions $\nu$ which can be made from $\la = (5)$ and $\mu = (2,1,1)$. These are chosen to show the role of the third-to-last column, labelled $\Pr(\text{Others})$, because the example in Table \ref{tab:combineparts5} was not large enough to show the range of behaviour that may be exhibited by this term. In each case, this is the probability of moving each of the other parts of $\mu$ to $\nu$. In the notation of the calculations in the proof of Proposition \ref{prop:combineparts}, this is the `including factor' $I(b)$.}
\label{tab:combineparts9}
\end{table}
\egroup

\begin{proof}[Proof of Proposition \ref{prop:combineparts}]
We directly compute the probability that any given pair of partitions $\nu$ and $\xi$ are produced by this process, for fixed $m$ and $n$. We consider the possible choices of $\la$ and $\mu$ which could produce these $\nu$ and $\xi$, as well as a choice of $\mu_0$. In case $1$ of Definition \ref{def:merge}, the value of $\mu_0$ is determined by the choice of $\la$ and $\mu$, while in case $2$, $\mu_0$ may be any part of $\mu$. The partition $\la$ is a union of parts of $\nu$, potentially with one of them reduced in size (corresponding to case 1). The partition $\mu$ is comprised of all remaining parts of $\nu$, all parts of $\xi$, and in case $1$, another part whose size is the amount by which the part in $\la$ was reduced.

We will need the following notation. Let $|\nu| = m+k$ and $|\xi| = n-k$. In case $1$, let $\la_0$ be the size of the part of $\la$ with which $\mu_0$ was combined. In case $2$, there is no such part. 

For concreteness, for each $i$ let $a_i$ be the number of parts of size $i$ in $\la$, $b_i$ be the number of parts of size $i$ in $\mu$ which were added to $\nu$, and $c_i$ the number of parts of size $i$ in $\mu$ which were added to $\xi$, not counting the part $\mu_0$ of $\mu$ and in case $1$, not counting the part $\la_0$ of $\la$. 

Let $I(b)$ be the probability of moving a specific collection of $b_i$ parts of each size $i$ from $\mu$ to $\nu$. The probability of moving no more parts after these is $\frac{m-n+2k}{m+k}$. If there were more parts of these sizes in $\mu$, some of which were moved to $\nu$ and some of which were not, then the probability that the parts moved from $\mu$ to $\nu$ are exactly $b_i$ parts of each size $i$ is $$I(b)\frac{m-n+2k}{m+k}\prod_{i=1}^n \binom{b_i+c_i}{b_i}.$$ In the language of Remark \ref{rem:include}, $I(b) = f(m,\mu',\mu_0)$, where $\mu'$ is the subpartition of $\mu$ comprised of $\mu_0$ and $b_i$ parts of each size $i$.

In the following, we will use $\Pr$ for the probability that a certain event occurs in the process we are analysing, and $\Pre$ for the \textbf{ex}act probability of an event, derived from a uniform distribution on permutations. The distribution of cycle types in uniformly random permutations is given in Lemma \ref{lem:cycletype}. Products $\prod_p$ are over all possible sizes $p$ of partition parts.

Rather than repeat similar calculations for terms corresponding to cases $1$ and $2$, we give the more general expression, for case $1$, and describe how it must be modified in case $2$. In case $2$, there is no part $\la_0$, so terms depending on $\la_0$ should be omitted from the following expressions. These terms cancel out quickly, and the resulting expressions are correct in both cases. For the individual terms to be correct in the initial expressions, in case $2$ the term $\la_0(a_{\la_0} + 1)$ should be interpreted as $1$, and the index variable $p$ should never be equal to $\la_0$, so products $\prod_{p \neq \la_0}$ do not miss any terms, and terms $\left[\cdots\right]_{p=\la_0}$ are ignored. 

To produce the specific partitions $\nu$ and $\xi$, several things need to happen. We need to start with appropriate partitions $\la$ and $\mu$, choose the correct parts of $\la$ and $\mu$ to combine, if any, including a choice of $\mu_0$ in case $2$, and then decide how the other parts of $\mu$ should be distributed between $\nu$ and $\xi$. Finally, the probabilities of these sequences of events must be summed over the various choices of $\la$, $\mu$, and $\mu_0$ which could produce the required $\nu$ and $\xi$. We compute as follows:

\begin{dgroup*}
\begin{dmath*}\Pr(\nu,\xi) = \sum_{\la,\mu,\mu_0}\Pre(\text{Choose $\la,\mu$})\Pr(\text{Combine correct parts})\Pr(\text{Parts to }\mu)\Pr(\text{Parts to }\xi)
\end{dmath*}
\begin{dmath*}
= \sum_{\la,\mu,\mu_0}\left(\Pre(\la)\Pre(\mu)\right)\left(\frac{\la_0(a_{\la_0}+1)\mu_0(b_{\mu_0}+c_{\mu_0}+1)}{n(m+1)}\right)\cdot\left(I(b)\prod_{p} \binom{b_p+c_p}{b_p}\right)\left(\frac{m-n+2k}{m+k}\right)
\end{dmath*}
\begin{dmath*}
= \sum_{\la,\mu,\mu_0}\left(\frac{1}{\prod_{p \neq \la_0}p^{a_p}a_p!\left[p^{a_p+1}(a_p+1)!\right]_{p=\la_0}}\cdot\frac{1}{\prod_{p \neq \mu_0}p^{b_p+c_p}(b_p+c_p)!\left[p^{b_p+c_p+1}(b_p+c_p+1)!\right]_{p=\mu_0}}\cdot\left(\frac{\la_0(a_{\la_0}+1)\mu_0(b_{\mu_0}+c_{\mu_0}+1)}{n(m+1)}\right)\cdot I(b)\cdot\prod_{p} \binom{b_p+c_p}{b_p}\cdot \left(\frac{m-n+2k}{m+k}\right)\right)
\end{dmath*}
\begin{dmath*}
= \sum_{\la,\mu,\mu_0}\left(\frac{1}{\prod_{p}p^{a_p}a_p!}\cdot\frac{1}{\prod_{p}p^{b_p+c_p}(b_p+c_p)!}\cdot\frac{1}{\la_0 (a_{\la_0}+1)\mu_0 (b_{\mu_0}+c_{\mu_0}+1)}\cdot\left(\frac{\la_0(a_{\la_0}+1)\mu_0(b_{\mu_0}+c_{\mu_0}+1)}{n(m+1)}\right)\cdot I(b)\cdot\prod_{p} \binom{b_p+c_p}{b_p}\cdot \left(\frac{m-n+2k}{m+k}\right)\right)
\end{dmath*}
\begin{dmath*}
= \sum_{\la,\mu,\mu_0}\left(\frac{1}{\prod_{p}p^{a_p}a_p!}\cdot\frac{\prod_{p}\binom{b_p+c_p}{b_p}}{\prod_{p}p^{b_p+c_p}(b_p+c_p)!}\cdot I(b) \cdot \left(\frac{m-n+2k}{(m+k)(n(m+1))}\right)\right)
\end{dmath*}
\end{dgroup*}

We now divide this probability by the probabilities of getting $\nu$ and $\xi$ as the cycle types of uniform elements of $S_{m+k}$ and $S_{n-k}$. Showing that this quotient does not depend on $\nu$ or $\xi$ will complete the proof (The quotient would be equal to $1$ if we conditioned on $k$). In the following, terms corresponding to case 2 are described by taking $\la_0 = 0$, so that in all cases, $\la_0+\mu_0$ is the size of the part containing $\mu_0$ after this part has been added to $\nu$, whether it was combined with an existing part or not.

We divide by the probability 

\begin{dgroup*}
\begin{dmath*}
\Pre(\nu)\Pre(\xi) = \frac{1}{\prod_{p \neq \la_0+\mu_0}p^{a_p+b_p}(a_p+b_p)!\cdot \left[p^{a_p+b_p+1}(a_p+b_p+1)!\right]_{p=\la_0+\mu_0}} \cdot \frac{1}{\prod_{p}p^{c_p}(c_p)!}
\end{dmath*}
\begin{dmath*}
 = \frac{1}{\prod_{p}p^{a_p+b_p}(a_p+b_p)!} \cdot \frac{1}{(\la_0+\mu_0)(a_{\la_0+\mu_0} + b_{\la_0+\mu_0} + 1)}\cdot \frac{1}{\prod_{p}p^{c_p}(c_p)!}
\end{dmath*}
\end{dgroup*}

This gives the ratio

\begin{dgroup*}
\begin{dmath*}
\frac{\Pr(\nu,\xi)}{\Pre(\nu)\Pre(\xi)} = \sum_{\la,\mu,\mu_0}\left(\frac{\prod_{p}p^{a_p+b_p}(a_p+b_p)!}{\prod_{p}p^{a_p}a_p!}\cdot\frac{\prod_{p}\left(\binom{b_i+c_i}{b_i}p^{c_p}(c_p)!\right)}{\prod_{p}p^{b_p+c_p}(b_p+c_p)!}\cdot(\la_0+\mu_0)(a_{\la_0+\mu_0} + b_{\la_0+\mu_0} + 1)\cdot I(b)\cdot\frac{m-n+2k}{(m+k)(n(m+1))}\right)
\end{dmath*}
\begin{dmath*}
 = \sum_{\la,\mu,\mu_0}\left(\prod_{p}\left(\binom{a_p+b_p}{a_p}\right)(\la_0+\mu_0)(a_{\la_0+\mu_0} + b_{\la_0+\mu_0} + 1)\cdot I(b)\cdot\frac{m-n+2k}{(m+k)(n(m+1))}\right)
\end{dmath*}
\end{dgroup*}

Noting that our goal is just to show that this probability does not depend on the partitions $\nu$ and $\xi$, we are left to check Proposition \ref{prop:subparts}, and then the proof is complete.

\begin{Proposition}\label{prop:subparts}
For partitions $\nu$ and $\xi$ as in Proposition \ref{prop:combineparts}, the sum over all partitions $\la$ and $\mu$ of fixed sizes $m$ and $n$, and in case 2, also over a choice of $\mu_0$, of \begin{equation}\label{eq:weight}\prod_{p}\left(\binom{a_p+b_p}{a_p}\right)(\la_0+\mu_0)(a_{\la_0+\mu_0} + b_{\la_0+\mu_0} + 1)\cdot I(b)\end{equation} is equal to $|\nu|$. In particular, its dependence on $\nu$ and $\xi$ is only on the size of $\nu$.

Here, notation is as used in that result, so $\la$ and $\mu$ are partitions so that 
\begin{itemize}
\item $\nu$ is obtained by adding some parts of $\mu$ to $\la$, possibly merging one with an existing part $\la_0$
\item $\xi$ is comprised of the remaining parts of $\mu$
\item $(\la_0+\mu_0)$ is the size of the combined part in $\nu$, or if there is none such, of an arbitrary part $\mu_0$ of $\nu$ which was moved from $\mu$
\item $(a_{\la_0+\mu_0} + b_{\la_0+\mu_0} + 1)$ is the number of parts of size $\la_0 + \mu_0$ in $\nu$
\item $I(b)$ is the including factor of the parts of $\mu$ included in $\nu$ (Definition \ref{def:include}). (It doesn't matter if we demand that the remaining parts are excluded, as this results in a factor which is a constant --- it does not depend on $\nu$ and $\xi$ beyond dependence on their sizes via $k$.)
\item $a_p$ and $b_p$ are the number of parts of size $p$ in $\nu$ which came from $\la$ and from $\mu$ respectively, not counting the combined part.
\end{itemize}
\end{Proposition}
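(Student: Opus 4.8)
The plan is to evaluate the sum directly. The first move is to reduce to the case $\xi=\emptyset$, that is $k=n$: the summand \eqref{eq:weight} refers only to the multiplicities $a_p,b_p$ of the parts of $\nu$ and to the including factor $I(b)=f(m,\mu,\mu_0)$ of the parts of $\mu$ moved into $\nu$, and the set of pairs $(\la,\mu)$ (together with a choice of $\mu_0$) producing $(\nu,\xi)$ is governed, as far as this sum is concerned, only by how the parts of $\nu$ are split between ``from $\la$'' and ``from $\mu$'' --- the $\la$-parts having total size $m-\la_0$ --- with no dependence on $\xi$ beyond its size. So the sum depends only on $\nu$ and on $m,n,k$, and we may as well take $\xi$ empty; then $|\nu|=m+n$ and $I(b)=f(m,\mu,\mu_0)$ is precisely the probability that the merge moves every part of $\mu$ other than $\mu_0$ into $\nu$. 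Next I would absorb the combinatorial prefactor into a change of summation variable: writing $s=\la_0+\mu_0$ for the size of the combined part, one has $\binom{a_s+b_s}{a_s}(a_s+b_s+1)=\frac{(a_s+b_s+1)!}{a_s!\,b_s!}$, so $\prod_p\binom{a_p+b_p}{a_p}\cdot(a_{\la_0+\mu_0}+b_{\la_0+\mu_0}+1)$ is exactly the number of ways to colour each part of $\nu$ as ``$\la$'', ``$\mu$'', or ``combined'' --- with one combined part, of size $s$ --- consistently with the multiplicities $(a_p)$ and $(b_p)$. Hence the whole sum equals $\sum_c(\la_0+\mu_0)\,f(m,\mu_c,\mu_{0,c})$ over labelled colourings $c$ of the parts of $\nu$, where the split $\la_0+\mu_0$ of the combined part $P^*$ is forced by $\la_0=m-(\text{total size of the }\la\text{-coloured parts})$ and is constrained by $0\le\la_0\le|P^*|-1$; the goal is to show this is $m+n$.

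For the core identity I would unfold $f(m,\mu_c,\mu_0)$ into its defining sum over orderings of the moved parts, so that a colouring together with such an ordering $\mu_1,\mu_2,\dots$ contributes $|P^*|\prod_{j\ge1}\frac{\mu_j}{\,m+\mu_0+\mu_1+\cdots+\mu_{j-1}}$. The algebraic point that makes this tractable is that, because the moved parts of $\mu$ have total size $n-\mu_0$, the denominator $m+\mu_0+\mu_1+\cdots+\mu_{j-1}$ equals $m+n$ minus the sizes of $\mu_j$ and of all later moved parts; the $\mu_0$-dependence of the denominators thus disappears, leaving $m+n$ minus tail sums of part sizes. I would then establish the identity by induction --- on the number of parts of $\nu$, or on $m+n$ --- stripping off one part of $\nu$ at a time and using the recursion $f(m,\mu,\mu_0)=\sum_j\frac{\mu_j}{m+\mu_0}\,f(m+\mu_j,\mu\setminus\{\mu_j\},\mu_0)$ obtained by conditioning on the first moved part. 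The base case $n=1$ is Broder's insertion step (Example \ref{ex:add1}): each part of $\nu$ then admits exactly one valid colouring, with $f=1$, so the sum is $\sum_{P^*}|P^*|=m+n$.

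The hard part will be the bookkeeping in the inductive step: the constraint $0\le\la_0\le|P^*|-1$ is entangled with the weight $|P^*|$ in a way that does not decouple block by block, so $|P^*|$ cannot simply be pulled out of the inner sum over colourings with a fixed combined block. For instance, for $\nu=(4,1)$ with $m=3$, $n=2$ that inner sum is $\tfrac54$ rather than $1$, the surplus being offset by a block whose inner sum is $0$; only the $|P^*|$-weighted total over all blocks equals $m+n$. So the weights and the validity constraints must be carried together through the induction. As a consistency check, after dividing by $m+n$ the claimed identity asserts exactly that, conditioned on the merge emptying $\xi$, the resulting partition $\nu$ has the uniform cycle-type distribution on $S_{m+n}$ --- the $k=n$ case of Proposition \ref{prop:combineparts} --- but since the proof of that proposition rests on this one, the argument must be self-contained.
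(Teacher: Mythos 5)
Your setup is sound and matches the paper's: the reduction to $\xi=\emptyset$ is the same one the paper makes, the reinterpretation of $\prod_p\binom{a_p+b_p}{a_p}\cdot(a_{\la_0+\mu_0}+b_{\la_0+\mu_0}+1)$ as a count of labelled assignments of the parts of $\nu$ (which came from $\la$, which from $\mu$, which is the combined part, with the split of the combined part forced by $m$) is a correct and useful reformulation, and your base case $n=1$ agrees with the paper's $k=1$ base case. The gap is that the inductive step --- which is the entire content of the proposition --- is not carried out, and the recursion you propose to drive it points the wrong way. Conditioning on the \emph{first} moved part gives the factor $\frac{\mu_j}{m+\mu_0}$, whose denominator varies with the colouring through $\mu_0$, so it cannot be pulled out of the inner sum; worse, the residual factor is $f(m+\mu_j,\mu\setminus\{\mu_j\},\mu_0)$, and replacing $m$ by $m+\mu_j$ changes the forced split $\la_0=m-(\text{total size of the $\la$-coloured parts})$ of the combined part, so the residual terms are not the weights of any configuration of the same shape and the inductive hypothesis does not apply to their sum. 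This is exactly the ``entanglement'' you flag with the $\nu=(4,1)$ example, but flagging it is where the proposal stops; the bookkeeping you defer is the theorem.

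The resolution (and the paper's route) is already latent in your own observation that each denominator equals $m+n$ minus a tail sum: peel the ordering from the \emph{end}, not the beginning. Group the colourings in which $\mu$ has more than one part according to which specific part $Q$ of $\nu$, of size $i$, is the \emph{last} to be included. The corresponding factor is $\frac{i}{m+n-i}$, which depends only on $i$ and the global sizes, hence pulls out of the group; deleting $Q$ leaves exactly a configuration of $\nu\setminus Q$ with the same $m$ and with $n$ replaced by $n-i$ (the combined part and its split are untouched), so induction on $n$ gives a contribution of $\frac{i}{m+n-i}\,(m+n-i)=i$ for each part of $\nu$ of size $i<n$. The colourings in which $\mu_0$ is the only moved part must be treated separately (your induction never mentions them): there the weight is just the size of the combined part, and they contribute the total size of the parts of $\nu$ of size at least $n$. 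Adding the two contributions gives $\sum_{|Q|<n}|Q|+\sum_{|Q|\ge n}|Q|=|\nu|$, as required. Without replacing your first-part recursion by this last-part grouping (or some equivalent device that decouples the weight from $\mu_0$ and from the forced split), the proof as proposed does not go through.
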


To show how much simpler the expressions of Proposition \ref{prop:subparts} are than those of Proposition \ref{prop:combineparts}, Tables \ref{tab:combineparts52} and \ref{tab:combineparts92} show the calculations required to verify Proposition \ref{prop:subparts} in the same cases as Tables \ref{tab:combineparts5} and \ref{tab:combineparts9} for Proposition \ref{prop:combineparts}.

\bgroup
\def\arraystretch{1.3}
\begin{table}\resizebox{\textwidth}{!}{%
$\begin{array}{cccccccccccc}
\nu & \xi & \la & \mu & \la_0 & \mu_0 & \binom{a_1+b_1}{a_1} & \la_0 + \mu_0 & \#(\la_0+\mu_0) & I(b) & \text{Product} & \text{Total} \\
(5) & \emptyset & (3) & (2) & 3 & 2 & & 5 & & & 5 & 5\\
\hline
(4,1) & \emptyset & (3) & (1,1) & 3 & 1 & & 4 & & \f{1}{4} & 1 & \\
 & & (2,1) & (2) & 2 & 2 & & 4 & & & 4 & 5\\
\hline
(3,2) & \emptyset & (3) & (2) & & 2 & & 2 & & & 2 & \\
&  & (2,1) & (2) & 1 & 2 & & 3 & & & 3 & 5\\
\hline
(3,1,1) & \emptyset & (3) & (1,1) & & 1 & & 1 & 2 & \f{1}{4} & \f{1}{2} & \\
 & & (2,1) & (1,1) & 2 & 1 & 2 & 3 & & \f{1}{4} & \f{3}{2} & \\
 & & (1,1,1) & (2) & 1 & 2 & & 3 & & & 3 & 5\\
\hline
(2,2,1) & \emptyset & (2,1) & (2) & & 2 & & 2 & 2 & & 4 & \\
 & & (2,1) & (1,1) & 1 & 1 & & 2 & 2 & \f{1}{4} & 1 & 5\\
\hline
(2,1,1,1) & \emptyset & (2,1) & (1,1) & & 1 & 2 & 1 & 3 & \f{1}{4} & \f{3}{2} & \\
 & & (1,1,1) & (2) & & 2 & & 2 & & & 2 & \\
 & & (1,1,1) & (1,1) & 1 & 1 & 3 & 2 & & \f{1}{4} & \f{3}{2} & 5\\
\hline
(1,1,1,1,1) & \emptyset & (1,1,1) & (1,1) & & 1 & 4 & 1 & 5 & \f{1}{4} & 5 & 5\\
\end{array}$}
\caption{Verification of Proposition \ref{prop:subparts} for partitions $\nu$ and $\xi$ of sizes $5$ and $0$, built from partitions $\la$ and $\mu$ of sizes $3$ and $2$. Compare to Table \ref{tab:combineparts5}. Empty cells indicate either that $\la_0$ is undefined or that a factor of $1$ has been omitted for clarity. The heading $\#(\la_0+\mu_0)$ is short for $a_{\la_0+\mu_0} + b_{\la_0+\mu_0} + 1$. The result is that all entries in the final column are equal.}
\label{tab:combineparts52}
\end{table}

\begin{table}\resizebox{\textwidth}{!}{%
$\begin{array}{cccccccccccc}
\nu & \xi & \la & \mu & \la_0 & \mu_0 & \binom{a_1+b_1}{a_1} & \la_0 + \mu_0 & \#(\la_0+\mu_0) & I(b) & \text{Product} & \text{Total} \\
(7,1,1) & \emptyset & (5) & (2,1,1) & 5 & 2 & & 7 & & \f{2}{7}\cdot\f{1}{8} & \f{1}{4} & \\
 &  & (4,1) & (3,1) & 4 & 3 & 2 & 7 & & \f{1}{8} & \f{1}{4} & \\
 &  & (3,1,1) & (4) & 3 & 4 & & 7 & & & \f{1}{4} & 9\\
\hline
(6,2,1) & \emptyset & (5) & (2,1,1) & 5 & 1 & & 6 & & \f{2}{6}\cdot\f{1}{8}+\f{1}{6}\cdot\f{2}{7} & \f{1}{4} & \\
& & (4,1) & (2,2) & 4 & 2 & & 6 & & \f{2}{7} & \f{1}{4} & \\
& & (3,2) & (3,1) & 3 & 3 & & 6 & & \f{1}{8} & \f{1}{4} & \\
& & (2,2,1) & (4) & 2 & 4 & & 6 & & & \f{1}{4} & 9\\
\hline
(5,2,1,1) & \emptyset & (5) & (2,1,1) & & 2 & & 2 & & \f{2}{7}\cdot\f{1}{8} & \f{1}{14} & \\
& & (5) & (2,1,1) & & 1 & & 1 & 2 & \f{2}{6}\cdot\f{1}{8}+\f{1}{6}\cdot\f{2}{7} & \f{1}{12}+\f{2}{21} & \\
& & (4,1) & (2,1,1) & 4 & 1 & 2 & 5 & & \f{2}{6}\cdot\f{1}{8}+\f{1}{6}\cdot\f{2}{7} & \f{10}{24}+\f{10}{21} & \\
& & (3,2) & (2,1,1) & 3 & 2 & & 5 & & \f{2}{7}\cdot\f{1}{8} & \f{5}{28} & \\
& & (3,1,1) & (2,2) & 3 & 2 & & 5 & & \f{2}{7} & \f{10}{7} & \\
& & (2,2,1) & (3,1) & 2 & 3 & 2 & 5 & & \f{1}{8} & \f{5}{4} & \\
& & (2,1,1,1) & (4) & 1 & 4 & & 5 & & & 5 & 9\\
\end{array}$}
\caption{Verification of Proposition \ref{prop:subparts} for some partitions $\nu$ and $\xi$ of sizes $9$ and $0$, built from partitions $\la$ and $\mu$ of sizes $5$ and $4$. Compare to Table \ref{tab:combineparts9}. Empty cells indicate either that $\la_0$ is undefined or that a factor of $1$ has been omitted for clarity. The heading $\#(\la_0+\mu_0)$ is short for $a_{\la_0+\mu_0} + b_{\la_0+\mu_0} + 1$. Again, all entries in the final column are equal.}
\label{tab:combineparts92}
\end{table}
\egroup

To prove Proposition \ref{prop:subparts}, we first note that the claim in this result does not depend on $\xi$. Adding or removing a part of any size to or from both $\xi$ and each choice of $\mu$ does not affect any of the terms in the expression, so we may assume that $\xi$ is empty and that this is a question just about breaking up a partition $\nu$ of size $m+k$ into parts of size $m = |\la|$ and $k = |\mu|$.

For each choice of $\la$, $\mu$, and $\mu_0$, let the weight be the sum of the corresponding terms in Equation \ref{eq:weight}. We need to show that the sum of the weights of all choices of $\la$, $\mu$, and $\mu_0$ is $m+k = |\nu|$.   

When $k=1$, the sum of the weights is $m+k$, because a choice of $\la$, $\mu$, and $\mu_0$ is specified by the size of the combined part, and its weight is the total size of parts of that size.

Some choices of $\la$ and $\mu$ have $\mu$ having only a single part, of size $k$. The total weight of these is the sum of the sizes of parts of $\nu$ of size at least $k$. 

The other choices of $\la$ and $\mu$ have $\mu$ having more than one part. We divide terms contributing to the weight of these choices according to which part from $\nu$ was the last to be included in $\nu$. Consider all terms where a part of size $i$ is the last to be considered, with $\mu$ initially having $b_i$ parts of size $i$, not counting $\mu_0$ even if it was of that size. These terms contribute $\frac{ib_i}{m+k-i}$ times the weight of the smaller configuration where a partition of $m+k-i$ is broken into $\la'$ of size $m$ and $\mu'$ of size $k-i$. By induction, the sum of such weights is $m+k-i$, so these terms contribute a total of $ib_i$. Adding these terms over all choices of $i$, we get the sum of the sizes of parts of size less than $k$.

Combining these two cases gives that the sum of weights of all choices of $\la$, $\mu$, and $\mu_0$ is the sum of the sizes of all parts of $\nu$, which completes the proof of Proposition \ref{prop:subparts}.  

Thus we have verified Proposition \ref{prop:combineparts}.
\end{proof}

\section{Further work}

A natural generalisation of the random transposition walk is to, at each step, choose $k$ cards and randomise them among their current positions. When $k=2$, this is the lazy random transposition walk. It is possible to construct an analogue of Broder's strong stationary time for this walk (Sections 5.4 and 5.5 of \cite{GWThesis}, but it seems likely that to prove cutoff would require an improvement by another factor of two, as is done for the random transposition walk in the present paper. The difficulty lies in finding the appropriate generalisations of Definition \ref{def:merge} and Proposition \ref{prop:mixed}.

\bibliographystyle{plain}
\bibliography{bib}
\end{document}